\theoremstyle{remark}
\newmdenv[topline=false,rightline=false,bottomline=false,nobreak=false]{proofaside}
\let\oldproposition\proposition
\renewcommand{\proposition}{%
  \crefalias{theorem}{nprop}%
  \oldproposition}
\let\oldrem\rem
\renewcommand{\rem}{%
  \crefalias{theorem}{nrem}%
  \oldrem}
\let\oldcorollary\corollary
\renewcommand{\corollary}{%
  \crefalias{theorem}{ncor}%
  \oldcorollary}
\let\oldlemma\lemma
\renewcommand{\lemma}{%
  \crefalias{theorem}{nlem}%
  \oldlemma}
\crefname{nrem}{Rem.}{Rems.}
\crefname{rem}{Rem.}{Rems.}
\crefname{theorem}{Thm.}{Thms.}
\crefname{nlem}{Lem.}{Lems.}
\crefname{ncor}{Cor.}{Cors.}
\crefname{proposition}{Prop.}{Props.}
\crefname{nprop}{Prop.}{Props.}
\crefname{assumption}{Assump.}{Assumps.}
\crefname{talign}{}{}
\crefname{section}{Sec.}{Secs.}
\crefname{appendix}{App.}{Apps.}
\crefname{equation}{}{}
\def\balign#1\ealign{\begin{align}#1\end{align}}
\def\baligns#1\ealigns{\begin{align*}#1\end{align*}}
\def\balignat#1\ealign{\begin{alignat}#1\end{alignat}}
\def\balignats#1\ealigns{\begin{alignat*}#1\end{alignat*}}
\def\bitemize#1\eitemize{\begin{itemize}#1\end{itemize}}
\def\benumerate#1\eenumerate{\begin{enumerate}#1\end{enumerate}}
\newenvironment{talign*}
 {\csname align*\endcsname}
 {\endalign}
\newenvironment{talign}
 {\csname align\endcsname}
 {\endalign}
\def\balignst#1\ealignst{\begin{talign*}#1\end{talign*}}
\def\balignt#1\ealignt{\begin{talign}#1\end{talign}}
\newcommand{\qtext}[1]{\quad\text{#1}\quad} 
\newcommand{\stext}[1]{\ \text{#1}\ } 
\newcommand{\sstext}[1]{\ \ \text{#1}\ \ }
\let\originalleft\left
\let\originalright\right
\renewcommand{\left}{\mathopen{}\mathclose\bgroup\originalleft}
\renewcommand{\right}{\aftergroup\egroup\originalright}
\def\Renyi{R\'enyi\xspace}
\def\tinycitep*#1{{\tiny\citep*{#1}}}
\def\tinycitealt*#1{{\tiny\citealt*{#1}}}
\def\tinycite*#1{{\tiny\cite*{#1}}}
\def\smallcitep*#1{{\scriptsize\citep*{#1}}}
\def\smallcitealt*#1{{\scriptsize\citealt*{#1}}}
\def\smallcite*#1{{\scriptsize\cite*{#1}}}
\def\mbb#1{\mathbb{#1}}
\def\mc#1{\mathcal{#1}}
\def\mrm#1{\mathrm{#1}}
\def\reals{\mathbb{R}} %
\def\naturals{\mathbb{N}} %
\def\<{\left\langle} %
\def\>{\right\rangle}
\def\defeq{\triangleq} %
\def\half{\frac{1}{2}}
\def\norm#1{\|{#1}\|} %
\newcommand{\twonorm}[1]{\norm{#1}_2} %
\def\E{\mbb{E}} %
\def\P{\mbb{P}} %
\def\Var{\mrm{Var}} %
\def\Cov{\mrm{Cov}} %
\newcommand{\Unif}{\textnormal{Unif}}
\newcommand{\eqdist}{\stackrel{d}{=}}
\def\indep{\perp\!\!\!\perp} %
\newcommand{\iid}{\textrm{i.i.d.}\@\xspace}
\newcommand{\ncref}[1]{\cref{#1}: \nameref*{#1}} %
\newcommand{\pcref}[1]{\texorpdfstring{Proof of \ncref{#1}}{}} %
\newcommand{\Rsig}[1][\sigma]{\hyperref[Rsig]{\color{black}{R_{#1}}}} %
\newcommand{\wassbd}[1][\sigma]{\hyperref[cabris]{\color{black}{\omega^R_p(#1)}}} %
\newcommand{\KRsig}[1][\sigma]{\hyperref[KRsig]{\color{black}{K_{R,\sig}}}} 
\newcommand{\Rsigsqd}[1][\sigma]{\hyperref[Rsig]{\color{black}{R_{#1}^2}}} 
\newcommand{\Rsigpowfour}[1][\sigma]{\hyperref[Rsig]{\color{black}{R_{#1}^4}}}
\newcommand{\score}[1][S]{\mathfrak{s}_{#1}}
\newcommand{\density}[1][S]{f_{#1}}
\newcommand{\Cinfc}{C^{\infty}_c}
\newcommand{\Lp}[1][p]{L^{#1}}
\newcommand{\X}{\mathbf{X}}
\newcommand{\mytitle}{Bounding Hellinger Distance with Stein's Method}
\newcommand{\myshorttitle}{\mytitle}
\newcommand{\myabstract}{%
This work introduces a new, explicit bound on the Hellinger distance between a continuous random variable and a Gaussian with matching mean and variance. 
As example applications, we derive a quantitative Hellinger central limit theorem and efficient concentration inequalities for U-statistics.
}
\newcommand{\mykeywords}{%
Hellinger distance; Stein's method; central limit theorem; efficient concentration inequalities; U-statistics} %
\thanks{MA was funded by ONR  N000142112664}}%
\begin{document}
\section{Introduction}
We introduce a new, explicit bound (\cref{general_bound}) on the Hellinger distance \citep{hellinger1909neue} 
between a continuous random variable and a Gaussian with matching mean and variance.
\cref{general_bound}, based on Stein's method of exchangeable pairs \citep{stein1986approximate}, enables quantitative Hellinger central limit theorems (CLTs, \cref{ustat}) and efficient concentration inequalities (\cref{dependent_concentration}) for U-statistics. 
Notably, \cref{ustat} also avoids the unknown constants, spectral gap conditions \citep{artstein2004rate},
Stein kernel assumptions \citep{ledoux2015stein}, and exponential dependence on Kullback-Leibler divergence \citep{bobkov2014berry,bobkov2013rate} present in existing entropic CLTs for sums of independent random variables.

\section{Bounding Hellinger Distance to a Gaussian}\label{sec:general_bound}
We begin by presenting a general upper bound on the Hellinger distance 
between a continuous random variable and a Gaussian with matching mean and variance. 
Throughout, we let $Z$ denote a standard normal random variable with Lebesgue density $\varphi$
and $\Cinfc$ denote the set of infinitely differentiable compact functions from $\reals$ to $\reals$.
Further, for a random variable $S$ and $p\geq 1$, 
we write $C^p$ for the set of $p$-times continuously differentiable functions from $\reals$ to $\reals$, define $\norm{S}_p^p\defeq \E[|S|^p]$, 
and
let $\Lp(S) \defeq \{ \textup{measurable\ } f : \reals\to\reals \mid \E[|f(S)|^p]<\infty\}$.
Finally, we say that $S$ admits a \emph{(first-order) score function} $\score[S]$ or a \emph{second-order score function} %
$I_S$ if $\score[S], I_S \in \Lp[1](S)$  satisfy 
\begin{talign}\label{eq:score-assumps}
\E[\score[S](S)\Psi(S)]=-\E[\Psi'(S)]
\qtext{and}
\E[I_S(S)\Psi(S)]=\E[\Psi''(S)]
\qtext{for all} \Psi\in  \Cinfc.
\end{talign}
Notably, by \cref{It}, $\score[S] = f'/f$ and $I_S=f''/f$ whenever those quantities exist in $L^1(S)$. 
\begin{theorem}[Bounding Hellinger distance to a Gaussian]\label{general_bound}

Let $\mathbf{X}$ be random variable taking values in a Polish-space $\mathcal{X}$ and $g:\mathcal{X}\rightarrow\mathbb{R}$ be a measurable function for which $S\defeq g(\mathbf{X})$ has convex support $\Omega_S\subseteq\mathbb{R}$, $\E[S]=0$, 
$\Var(S)=1$, Lebesgue density $f_S\in C^1$, %
and {first and second-order score functions} %
$\score[S],I_S \in\Lp[2](S)$ satisfying $\twonorm{\score[S](S) S}<\infty$.
Fix any $D\in(0,1)$ and any anti-symmetric function $F(\cdot,\cdot):\mathcal{X}^2\rightarrow\mathbb{R}$ and, for each $t \geq -\half\log(1-D)$, 
define $S^t\defeq g(\X^t)$ where $\X^t$ is an exchangeable copy of $\mathbf{X}$ and
\begin{talign}\label{eq:exchangeable-pair-assumption}
\sum_{k=1}^{\infty}\frac{e^{-kt}k}{\sqrt{k-1}!\sqrt{1-e^{-2t}}^k}\|\E[(S^t-S)^{k-1}F(\X^t,\mathbf{X}) \mid S]\|_2< \infty.
\end{talign}
Then, for any $s\in\reals$,
we may bound the Hellinger distance
\begin{talign}\label{oui}
 H(S,Z) 
 &\defeq (\half\int_{\mathbb{R}}(\sqrt{f_S(x)}-\sqrt{\varphi(x)})^2dx)^{1/2}\\
 &\le\frac{1}{\sqrt{2}}\int_0^{\infty}\|I_{Z_t}(Z_t)-I_{Z}(Z_t)+Z_t(\score[Z_t](Z_t)-\score[Z](Z_t))\|_2dt\label{eq:prelim_hellinger_bound}
   \\&\le \frac{1}{\sqrt{2}}
   \{-\frac{1}{2}\log(1-D)(1+\|\score[S](S)S\|_2)
    +\|\score[S](S)\|_2(\sqrt{\frac{D}{1-D}}-\rm{tan}^{-1}(\sqrt{\frac{D}{1-D}}))\\
&+\frac{D}{1-D}\|I_S(S)\|_2
    + 
\int_{-\frac{1}{2}\log(1-D)}^{\infty}\|\frac{e^{-2t}}{{1-e^{-2t}}}h_2(Z)-\frac{e^{-t}}{\sqrt{1-e^{-2t}}}Sh_1(Z)-\tilde\tau_{t,s}\|_2dt\}\label{eq:second_hellinger_bound}
\end{talign}
in terms of the Gaussian score functions $\score[Z](z) \defeq -z$ and $I_Z(z) \defeq z^2-1$, the interpolants
\begin{talign}
Z_t\defeq e^{-t}S+\sqrt{1-e^{-2t}}Z 
    \sstext{with} 
    Z\indep S, 
    \sstext{density}
f_{Z_t},
    \sstext{and}
    (\score[Z_t],I_{Z_t}) \defeq (\frac{\partial_x{f_{Z_t}}}{f_{Z_t}}, 
\frac{\partial_x^2f_{Z_t}}{f_{Z_t}})
\end{talign}
for $t > 0$,  
and the Hermite sum 
\begin{talign}%
\tilde\tau_{t,s}
    &\defeq
\frac{se^{-t}}{\sqrt{1-e^{-2t}}}\E[F(\X^t,\mathbf{X})\mid S]h_1(Z)+\sum_{k=2}^{\infty}\frac{s e^{-kt}}{k!\sqrt{1-e^{-2t}}^k}\E[(S^t-S)^{k-1}F(\X^t,\mathbf{X})\mid S]h_k(Z)
\end{talign}
where $h_k(x)\defeq (-1)^k e^{x^2/2} \frac{\partial^k}{\partial x}e^{-x^2/2}$ designates  the $k$-th Hermite polynomial for $k\in\naturals$.
\end{theorem}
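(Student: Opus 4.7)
The plan is to interpolate between $S$ and $Z$ along the Ornstein--Uhlenbeck (OU) semigroup (writing $\rho \defeq e^{-t}$, $\sigma \defeq \sqrt{1-e^{-2t}}$) and to express the Hellinger distance as a time integral that I bound by score-function identities for small $t$ and Stein's exchangeable-pair machinery for large $t$.

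For the abstract bound \eqref{eq:prelim_hellinger_bound}: since $Z_0 = S$ and $Z_t \to Z$ in $L^2$ as $t\to\infty$, the fundamental theorem of calculus gives $\sqrt{f_S}-\sqrt{\varphi} = -\int_0^\infty \partial_t\sqrt{f_{Z_t}}\,dt$. Minkowski's inequality in $L^2(dx)$ then bounds $\sqrt{2}\,H(S,Z)$ by $\int_0^\infty \|\partial_t\sqrt{f_{Z_t}}\|_{L^2(dx)}\,dt$; a direct computation gives $\|\partial_t\sqrt{f_{Z_t}}\|_{L^2(dx)} = \half\|(\partial_t f_{Z_t}/f_{Z_t})(Z_t)\|_2$, and the OU Fokker--Planck equation $\partial_t f_{Z_t} = \partial_z(zf_{Z_t})+\partial_z^2 f_{Z_t}$ rewrites the ratio as $1 + z\score[Z_t](z) + I_{Z_t}(z)$. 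Subtracting the Gaussian identity $1+z\score[Z](z)+I_Z(z)=0$ produces exactly the integrand of \eqref{eq:prelim_hellinger_bound}, with the $L^2$ hypotheses on $\score[S]$, $I_S$, and $S\score[S](S)$ providing the integrability needed at each $t > 0$.

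For the refined bound \eqref{eq:second_hellinger_bound}, I split at $T \defeq -\half\log(1-D)$, where $\rho^2 = 1-D$ and $\sigma^2 = D$. On $[0,T]$ I use the integration-by-parts identities
\begin{talign*}
\score[Z_t](z) = \frac{1}{\rho}\E[\score[S](S)\mid Z_t=z], \qquad I_{Z_t}(z) = \frac{1}{\rho^2}\E[I_S(S)\mid Z_t=z],
\end{talign*}
obtained by shifting $z$-derivatives on the Gaussian kernel $\varphi_\sigma(z-\rho s)$ to $s$-derivatives and integrating by parts against $f_S$. Combined with the triangle inequality, the tower/Jensen property, and $\|Z_t\score[S](S)\|_2 \le \rho\|S\score[S](S)\|_2 + \sigma\|\score[S](S)\|_2$ (using $Z\indep S$), these bound the integrand of \eqref{eq:prelim_hellinger_bound} by $\rho^{-2}\|I_S(S)\|_2 + 1 + \|S\score[S](S)\|_2 + (\sigma/\rho)\|\score[S](S)\|_2$; integrating over $[0,T]$ (with the substitution $u = e^{2t}-1$ in the $\sigma/\rho$ term) produces the first three explicit terms of \eqref{eq:second_hellinger_bound}.

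On $[T,\infty)$ I invoke Mehler's formula $f_{Z_t}/\varphi = 1 + R_t$ with $R_t(z) = \sum_{k\ge 3}\frac{\rho^k}{k!}\E[h_k(S)]h_k(z)$; only $k\ge 3$ contributes because $\E[h_1(S)]=0$ and $\E[h_2(S)]=0$ by the mean-and-variance hypotheses. Using $\partial_z\varphi + z\varphi = 0$ together with $h_k'=k h_{k-1}$ and the Hermite recurrence $zh_{k-1}=h_k+(k-1)h_{k-2}$, a direct computation rewrites the integrand of \eqref{eq:prelim_hellinger_bound} at $Z_t$ as $(R_t''(Z_t)-Z_tR_t'(Z_t))/(1+R_t(Z_t))$, and the recurrence collapses the numerator to $-\sum_{k\ge 3}\frac{\rho^k k}{k!}\E[h_k(S)]h_k(Z_t)$. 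Each moment $\E[h_k(S)]$ is then replaced by its exchangeable-pair approximation via the core identity $\E[F(\X^t,\X)\psi(S)] = -\half\E[F(\X^t,\X)(\psi(S^t)-\psi(S))]$ (from anti-symmetry of $F$ and exchangeability) combined with the Taylor expansion $\psi(S^t)-\psi(S) = \sum_{k\ge 1}\frac{(S^t-S)^k}{k!}\psi^{(k)}(S)$; after using the Hermite addition formula to re-express $h_k(Z_t)$ in terms of $h_j(S)h_{k-j}(Z)$, the resulting double series regroups into $\tilde\tau_{t,s}$ (with $s$ a free scaling parameter) plus the two explicit remainders $\frac{\rho^2}{\sigma^2}h_2(Z)$ and $-\frac{\rho}{\sigma}Sh_1(Z)$, which are the Hermite pieces of the integrand that the exchangeable-pair series does not absorb. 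The main obstacle is exactly this last step: carefully aligning the two Hermite series term by term, isolating the $k=1,2$ corrections, and invoking the summability hypothesis \eqref{eq:exchangeable-pair-assumption} to justify the absolute $L^2$ convergence and the interchange of summation, expectation, and differentiation.
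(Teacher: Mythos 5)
Your first inequality \cref{eq:prelim_hellinger_bound} and the small-$t$ piece of \cref{eq:second_hellinger_bound} are correct and essentially match the paper, although your route to \cref{eq:prelim_hellinger_bound} is cleaner: you apply Minkowski's integral inequality directly to $\|\int_0^\infty\partial_t\sqrt{f_{Z_t}}\,dt\|_{L^2(dx)}$, whereas the paper inserts a $``-1"$ inside the time integral (justified by $\E[I_{Z_t}(Z_t)+1+Z_t\score[Z_t](Z_t)]=0$), applies Cauchy--Schwarz to obtain the self-improving estimate $H(S,Z_T)^2\le\sup_{t\le T}\tfrac{1}{\sqrt{2}}H(S,Z_t)\int_0^T\Delta_t\,dt$, and then bootstraps. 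Both work; the paper's detour is unnecessary once you observe the pointwise representation $\sqrt{\varphi}-\sqrt{f_S}=\int_0^\infty\partial_t\sqrt{f_{Z_t}}\,dt$, but you should still state the integrability check analogous to \cref{marreee} that makes FTC plus Minkowski applicable. Your small-$t$ reduction via $\score[Z_t](Z_t)=e^t\E[\score[S](S)\mid Z_t]$, $I_{Z_t}(Z_t)=e^{2t}\E[I_S(S)\mid Z_t]$, Jensen, and the triangle inequality reproduces the paper's $(b_1)$ estimate exactly.

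The large-$t$ piece is where there is a genuine gap. You propose a Mehler-series argument: write $f_{Z_t}/\varphi=1+R_t$ with $R_t(z)=\sum_{k\ge 3}\frac{\rho^k}{k!}\E[h_k(S)]h_k(z)$, reduce the integrand to $(R_t''-Z_tR_t')/(1+R_t)$, collapse the numerator to $-\sum_{k\ge 3}\frac{k\rho^k}{k!}\E[h_k(S)]h_k(Z_t)$, and then replace the Hermite moments of $S$ by exchangeable-pair expressions and re-expand $h_k(Z_t)$ via the addition formula. Three problems. First, Mehler's $L^1$ expansion of $f_{Z_t}/\varphi$ in Hermite polynomials needs $\E[h_k(S)]$ to exist and the series to converge; nothing in the theorem's hypotheses ($\score[S],I_S\in L^2(S)$, $\twonorm{S\score[S](S)}<\infty$, and the condition \cref{eq:exchangeable-pair-assumption} on the pair) guarantees finite polynomial moments of all orders, so the expansion is not available in the stated generality. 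Second, the division by $1+R_t(Z_t)$ is a serious obstacle: the right-hand side of \cref{eq:second_hellinger_bound} has no such denominator, and the only natural way it disappears is via the conditional-expectation structure (the kernel $\varphi(z')f_S(s)/f_{Z_t}(z)$), not via a Hermite-series manipulation. Third, the crucial step you flag as ``the main obstacle'' — aligning the two Hermite series, isolating the $k=1,2$ corrections, and justifying the interchanges — is precisely where the proof lives, and you leave it unresolved. The paper sidesteps all three issues: on $[T,\infty)$ it uses the \emph{other} conditional-expectation representation of the interpolant scores, namely $-\score[Z_t](Z_t)-Z_t=\E[\tfrac{e^{-2t}}{\sqrt{1-e^{-2t}}}Z-e^{-t}S\mid Z_t]$ and $I_{Z_t}(Z_t)=\tfrac{1}{1-e^{-2t}}\E[Z^2-1\mid Z_t]$, which together give $I_{Z_t}(Z_t)+1+Z_t\score[Z_t](Z_t)=\E[\tfrac{e^{-2t}}{1-e^{-2t}}h_2(Z)-\tfrac{e^{-t}}{\sqrt{1-e^{-2t}}}SZ\mid Z_t]$. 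It then proves the separate \cref{tau_0} that $\E[\tilde\tau_{t,s}\mid Z_t]=0$ (via the semigroup identity $(P_t\Psi)^{(k)}(S)=e^{-kt}(1-e^{-2t})^{-k/2}\E[h_k(Z)\Psi(Z_t)\mid S]$, Fubini justified by \cref{eq:exchangeable-pair-assumption}, analyticity of $(P_t\Psi)'$, exchangeability, and anti-symmetry), so $\tilde\tau_{t,s}$ may be inserted inside the conditional expectation for free; a single Jensen application then yields \cref{eq:second_hellinger_bound}. You should adopt this conditional-expectation route rather than the Mehler one: it requires only the stated $L^2$ hypotheses, automatically kills the $1+R_t$ denominator, and isolates the exchangeable-pair input in a standalone lemma whose proof is where the summability hypothesis is actually spent.
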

\begin{remark}[Parameter choices]
In applications, we will choose $D=\frac{1}{\sqrt{n}}$
and $s$, $\X^t$, and $F$ to satisfy %
$s\E[F(\X^t,\mathbf{X})\mid S]\approx -S$ 
and 
$\frac{s}{2}\E[(S^t-S)F(\X^t,\mathbf{X})\mid S]\approx 1$,
so that 
$\|\frac{e^{-2t}}{{1-e^{-2t}}}h_2(Z)-\frac{e^{-t}}{\sqrt{1-e^{-2t}}}Sh_1(Z)-\tilde\tau_{t,s}\|_2$ is small and 
the overall bound \cref{eq:second_hellinger_bound} is $O(\frac{\log(n)}{\sqrt{n}})$.
\end{remark}
\begin{remark}[Integrability]
The integrability condition \cref{eq:exchangeable-pair-assumption} ensures that $\twonorm{\tilde\tau_{t,s}}<\infty$ and holds, for example, whenever $S^t-S$ and $F(\X^t, \X)$ are bounded, as in \cref{ustat}.
\end{remark}

\begin{remark}[Second-order score bound]
The second-order score can often be bounded in terms of first-order score functions, as, by \cref{It2}, if $S\overset{d}{=}S_1+S_2$ for independent $S_1, S_2$ with score functions $\score[S_1]\in\Lp[2](S_1)$ and $\score[S_2]\in \Lp[2](S_2)$ then 
$S$ admits a second-order score function $I_S$ satisfying 
\begin{talign}\label{eq:second-order-score-bound}
\|I_S(S)\|_2\le \|\score[S_1](S_1)\|_2\|\score[S_2](S_2)\|_2.  
\end{talign}
\end{remark}
Our proof of \cref{general_bound} in  \cref{proof_general_bound}  relies on 
Stein's method of exchangeable pairs \citep{stein1986approximate}
and 
interpolation along the Ornstein-Uhlenbeck semigroup \citep[Chap.~4]{pavliotis2014stochastic}. %
The first component takes inspiration from the arguments used by \citet{bonis2020stein} to bound Wasserstein distances. 
To obtain an effective bound, \cref{general_bound} requires four inputs: the exchangeable copy $\X'$, the anti-symmetric function $F$, and the scalars $D$ and $s$. We next demonstrate how to select these inputs  when the target $S$ is a U-statistic.

\subsection{Application: A Quantitative Hellinger CLT for U-statistics}\label{ustat}
Given an \iid sequence of random variables $\X\defeq (X_i)_{i=1}^n$ taking values in $\mathcal{X}$ 
and a symmetric, bounded, measurable  function $u:\mathcal{X}^2\rightarrow\mathbb{R}$, consider the canonical U-statistic
\begin{talign}\label{eq:ustat}
U_n = g(\X) \defeq  \frac{1}{\sqrt{n}(n-1)}\sum_{1\le i\ne j\le n}u(X_i,X_j)-\mathbb{E}[u(X_i,X_j)].
\end{talign} 
Whenever $U_n$ admits first and second-order score functions we can bound its Hellinger distance to a Gaussian using  \cref{general_bound} 
with $D=\frac{1}{\sqrt{n}}$, $s=n$, and, for each $t>0$,
\begin{talign}
&\!\!\!\!\!F(\X^t,\X)
    \defeq
\frac{1}{2}(g(\X^t)-g(\X))+g_1(\X^t)-g_1(\X) 
    \ \text{for}\ 
\X^t
    \defeq
(X_1,\dots,X_{I-1},X'_I,X_{I+1},\dots,X_n),   
 \\
&(X_i')_{i=1}^n      
    \eqdist 
\X,\ 
I\sim \Unif([n]), 
    \stext{and}
g_1(\X) 
    \defeq 
\frac{1}{\sqrt{n}}\sum_{i\le n}\E[u(X_i, X_i')| X_i]-\E[u(X_i, X_i')],
\label{eq:pair_choices}
\end{talign}
and $(\X,(X_i')_{i=1}^n,I)$  mutually independent. 
Here we adopt the shorthand $[n]\defeq \{1,\dots,n\}$.
\begin{theorem}[Quantitative Hellinger CLT for U-statistics]\label{ustattheorem} 
Suppose that the U-statistic $U_n$ \cref{eq:ustat} admits first and second-order score functions $\score[U_n], I_{U_n}\in L^2(U_n)$ with $|u(X_1,X_2)| \stackrel{a.s.}{\leq} R$ and  
$4\Cov(u(X_1,X_2),u(X_1,X_3))=1$. 
Then, for all $n\ge 2$, 
\begin{talign}  \label{exact_comp}&
H(U_n,Z)
    \le 
\frac{3\alpha  R^3}{\sqrt{2n}}(R+\frac{R\log({4\sqrt{n}})}{2\sqrt{n}}
    +
\frac{4}{n^{1/4}})e^{\frac{2R^2}{\sqrt{n}}}
    +
\frac{\log(n)}{\sqrt{2}\sqrt{n-1}} \|u(X_1,X_2)\|_4^2
  (16+2\sqrt{2}+\frac{8}{\sqrt{n-1}})\\
  &-\frac{1}{2\sqrt{2}}\log(1-\frac{1}{\sqrt{n}})(1+\|\score[U_n](U_n)U_n\|_2)
  +
\frac{1}{\sqrt{2}(\sqrt{n}-1)}\|I_{U_n}(U_n)\|_2
   +
\frac{1}{3\sqrt{2}(\sqrt{n}-1)^{3/2}}\|\score[U_n](U_n)\|_2
\end{talign}
for $\alpha \defeq e^{19/300}\pi^{1/4}$. 
Hence, if $\max(\|\score[U_n](U_n)\|_2,\|I_{U_n}(U_n)\|_2,\|U_n\score[U_n](U_n)\|_2)=O(\log(n))$, 
\begin{talign}
H(U_n,Z)\lessapprox\frac{\log(n)}{\sqrt{n}}
\|u(X_1,X_2)\|_4^2 
(8\sqrt{2}+2)
\end{talign}
where the notation $a_n\lessapprox b_n$  signifies that $a_n\le b_n+o(b_n).$
\end{theorem}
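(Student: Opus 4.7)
The approach is to invoke \cref{general_bound} with $S=U_n$, $D=1/\sqrt{n}$, $s=n$, and the exchangeable pair $(\X,\X^t)$ together with the anti-symmetric $F$ from \cref{eq:pair_choices}. Because $|u|\le R$ almost surely and only the $I$-th coordinate is resampled in $\X^t$, a direct count yields the uniform increment bounds $|S^t-S|\le 4R/\sqrt{n}$ and $|F(\X^t,\X)|\le 4R/\sqrt{n}$, which in particular verify the integrability hypothesis \cref{eq:exchangeable-pair-assumption}. Substituting $D=1/\sqrt{n}$ into \cref{eq:second_hellinger_bound} immediately produces three of the terms of \cref{exact_comp}: the $\log$-term containing $\|\score[U_n](U_n)U_n\|_2$, the $\|I_{U_n}(U_n)\|_2$-term with $D/(1-D)=1/(\sqrt{n}-1)$, and the arctan-difference bounded via the elementary inequality $x-\arctan x\le x^3/3$ at $x=(\sqrt{n}-1)^{-1/2}$.

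The remaining work is to bound the integral $\int_{-\frac12\log(1-D)}^{\infty}\|\frac{e^{-2t}}{1-e^{-2t}}h_2(Z)-\frac{e^{-t}}{\sqrt{1-e^{-2t}}}Sh_1(Z)-\tilde\tau_{t,s}\|_2\,dt$. Combining the target with $\tilde\tau_{t,s}$ into a single Hermite expansion, using the independence $Z\indep\X$ and the orthogonality $\|h_k\|_{L^2(\varphi)}^2=k!$, I would apply Minkowski to bound the integrand by $\sum_{k\ge 1}\sqrt{k!}\,\|\alpha_k\|_2$, with $\alpha_1=-\frac{e^{-t}}{\sqrt{1-e^{-2t}}}(S+n\E[F\mid S])$, $\alpha_2=\frac{e^{-2t}}{1-e^{-2t}}(1-\frac{n}{2}\E[(S^t-S)F\mid S])$, and $\alpha_k=-\frac{n\,e^{-kt}}{k!(1-e^{-2t})^{k/2}}\E[(S^t-S)^{k-1}F\mid S]$ for $k\ge 3$. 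The $k\ge 3$ modes I would control by the uniform bound $|(S^t-S)^{k-1}F|\le (4R/\sqrt n)^{k}$; summing in $k$ (yielding an $e^{2R^2/\sqrt n}$ factor from a Hermite-generating-function-type series) and integrating in $t$ via the substitution $u=e^{-t}$ reduces the $t$-integrals to elementary forms involving $\arcsin\sqrt{1-D}$ and $\sqrt{(1-D)/D}$, producing the $\frac{3\alpha R^3}{\sqrt{2n}}(R+\frac{R\log(4\sqrt n)}{2\sqrt n}+\frac{4}{n^{1/4}})e^{2R^2/\sqrt n}$ contribution.

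For the crucial $k=1,2$ modes, the choice of $F$ is engineered so that $n\E[F\mid S]\approx -S$ and $\frac{n}{2}\E[(S^t-S)F\mid S]\approx 1$. I would expand $g(\X^t)-g(\X)$ via the Hoeffding decomposition of $U_n$: averaging over the uniformly chosen index $I$ extracts the first chaos, and the ``$\frac12(g(\X^t)-g(\X))$'' and ``$g_1(\X^t)-g_1(\X)$'' pieces of $F$ are chosen precisely so that $n\E[F\mid S]$ reproduces $-S$ with residual driven only by the second-order Hoeffding component. For $k=2$, the normalization $4\Cov(u(X_1,X_2),u(X_1,X_3))=1$ ensures $\frac{n}{2}\E[(S^t-S)F\mid S]=1$ to leading order. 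The hard part is this $k=2$ analysis: $(S^t-S)F$ must be expanded as a double sum over U-statistic edges adjacent to $I$, separated into terms that reproduce the covariance normalization and genuine higher-order errors bounded in $L^2$ by $\|u(X_1,X_2)\|_4^2/\sqrt{n-1}$ via Cauchy-Schwarz and fourth-moment estimates.

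Integrating the $k=1$ residual (whose weight $e^{-t}/\sqrt{1-e^{-2t}}$ contributes a finite $\arcsin$-type factor) and the $k=2$ residual (whose weight integrates to $\int_{-\frac12\log(1-D)}^\infty\frac{e^{-2t}}{1-e^{-2t}}\,dt=-\frac12\log D=\frac14\log n$) yields the $\frac{\log(n)}{\sqrt2\sqrt{n-1}}\|u(X_1,X_2)\|_4^2(16+2\sqrt2+\frac{8}{\sqrt{n-1}})$ term of \cref{exact_comp}. The ``Hence'' statement then follows by plugging the hypothesis that the three score norms are $O(\log n)$ into \cref{exact_comp}, using $-\tfrac12\log(1-1/\sqrt n)=O(1/\sqrt n)$ to absorb the log-term contribution, and retaining only the dominant $\log(n)/\sqrt n$ piece.
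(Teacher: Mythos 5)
Your outline follows the paper's overall architecture (invoke \cref{general_bound} with $D=1/\sqrt{n}$, $s=n$, and the exchangeable pair of \cref{eq:pair_choices}; split by Hermite mode $k$; analyze $k=1,2$ via Hoeffding and normalize; integrate in $t$), and the $k=1,2$ reasoning and the handling of $\beta_0$ are correct. However, there is a fatal quantitative gap in your treatment of the $k\ge 3$ modes.

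You bound $\|\E[(S^t-S)^{k-1}F(\X^t,\X)\mid U_n]\|_2$ by the pointwise uniform bound $(4R/\sqrt{n})^k$. This discards the cancellation induced by the conditioning. Plugging the uniform bound into the $k$-th mode of the integrand, $\frac{n\,e^{-kt}}{\sqrt{k!}\,(1-e^{-2t})^{k/2}}\,(4R/\sqrt{n})^k$, and substituting $w=e^{-t}$, the $k=3$ contribution integrates to
\begin{talign}
\frac{64R^3}{\sqrt{6n}}\int_0^{\sqrt{1-D}}\frac{w^2}{(1-w^2)^{3/2}}\,dw
  =\frac{64R^3}{\sqrt{6n}}\Big(\sqrt{\tfrac{1-D}{D}}-\arcsin\sqrt{1-D}\Big)
  \asymp \frac{64R^3}{\sqrt{6}\,n^{1/4}}
\end{talign}
when $D=1/\sqrt{n}$. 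That is $O(n^{-1/4})$, which is larger by a factor $n^{1/4}$ than the $O(n^{-1/2})$ main term in \cref{exact_comp} and therefore cannot yield the theorem's rate. By contrast, the paper's \cref{pair_moment_bounds} shows $\|\E[(S^t-S)^{k-1}F\mid U_n]\|_2 \lesssim 2^k R^k / (n\,\sqrt{n}^{\,k-1})$ for odd $k$ (and similarly for even $k$), which improves on your pointwise bound by roughly $2^k\sqrt{n}$. The extra $\sqrt{n}$ comes from writing the conditional expectation as an average over the resampled index, $\E[(S^t-S)^{k-1}F\mid U_n]=\frac{1}{n}\sum_{i\le n}\E[(\cdots)_i\mid \X]$, and exploiting that the $n$ summands, conditionally on $\X$, decorrelate (via the independent copies $X_i'$) so the $L^2$ norm of the sum grows like $\sqrt{n}$, not $n$. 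This is the content of the estimate $\|\sum_i(\sum_{j\ne i}u(X_i',X_j)-u(X_i,X_j))^k\|_2^2\le 2^{2k}n(n-1)^{2k-1}(n-1+k)R^{2k}$ in the paper's proof. A further $t$-dependent bookkeeping trick there, $\frac{R^{k-3}}{\sqrt{n}^{\,k-3}}\le c(t)^{k-3}$ for $t$ past the cutoff, converts the moment bound into one whose $t$-integral is finite and summable in $k$. Without that $\sqrt{n}$ gain, your Hermite-series resummation and the $e^{2R^2/\sqrt{n}}$ factor are not enough to rescue the rate.
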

\cref{ustattheorem}, proved in \cref{proof_ustattheorem}, yields a U-statistic Hellinger CLT rate of order $\frac{\log(n)}{\sqrt{n}}$ whenever the U-statistic score functions have suitably bounded moments. 
This matches, up to a logarithmic factor, the entropic CLT rates obtained for sums of independent random variables \citep{artstein2004rate,ledoux2015stein,bobkov2014berry,bobkov2013rate}. Moreover, U-statistic score functions can often be controlled by their input variable score functions as the next example, proved in \cref{proof_example}, illustrates. %
\begin{corollary}[Quantitative Hellinger CLT for chaos statistics]\label{example}
Suppose that the \iid sequence $(X_i)_{i\ge 1}$ has $\score[X_1]\in L^4(X_1)$ and $X_1 \in [a,b]$ for $a > 0$. 
Then the U-statistic $U_n$ \cref{eq:ustat} with $u(x,y) = xy$ has $\max( \|\score[U_n](U_n)\|_2,\|I_{U_n}(U_n)\|_2,\|U_n\score[U_n](U_n)\|_2)=O(1)$
and 
    \begin{talign}
    H(\frac{1}{\sigma}U_n,Z)\lessapprox\frac{\log(n)}{\sqrt{n}}\frac{\E[X_1^4]}{\sigma^2} (8\sqrt{2}+2)
    \qtext{for}
    \sigma^2\defeq 4\E[X_1]^2\Var(X_1).
    \end{talign} 
\end{corollary}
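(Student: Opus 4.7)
The plan is to apply \cref{ustattheorem} to the normalized U-statistic $U_n/\sigma$ and to verify each of its hypotheses. First, a direct computation gives $\Cov(u(X_1,X_2), u(X_1,X_3)) = \E[X_1]^2\Var(X_1) = \sigma^2/4$, so dividing the kernel by $\sigma$ yields $4\Cov(u(X_1,X_2)/\sigma, u(X_1,X_3)/\sigma) = 1$; the boundedness $X_i\in[a,b]$ gives the constant $R=b^2/\sigma$ in \cref{ustattheorem} and $\|u(X_1,X_2)/\sigma\|_4^2 = \E[X_1^4]/\sigma^2$. The stated Hellinger bound then follows by substituting these values into \cref{exact_comp} and retaining only the dominant $\log(n)/\sqrt{n}$ term, provided that $\|\score[U_n](U_n)\|_2$, $\|I_{U_n}(U_n)\|_2$, and $\|U_n\score[U_n](U_n)\|_2$ are all $O(1)$.

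The main work is the first-order score bound, and this is the step I expect to be the main obstacle: a naive single-coordinate conditioning only gives the loose $\|\score[U_n](U_n)\|_2 \le \sqrt{n}\,\|\score[X_1](X_1)\|_2/(2a)$. To obtain the correct $O(1)$ order, I exploit that $U_n$ is affine in each coordinate: for every $i\in[n]$, $U_n = A_i X_i + B_i$ with $A_i = \frac{2}{\sqrt{n}(n-1)}\sum_{j\ne i}X_j \ge 2a/\sqrt{n}$ a.s.\ and $(A_i, B_i)$ independent of $X_i$. A change-of-variables in the conditional density yields $\score[U_n](U_n) = \E[\score[X_i](X_i)/A_i \mid U_n]$ for each $i$, so averaging over $i$ and using the tower property gives
\begin{talign*}
\score[U_n](U_n) = \E\bigl[\tfrac{1}{n}\sum_{i=1}^n \score[X_i](X_i)/A_i \,\bigm|\, U_n\bigr].
\end{talign*}
Jensen's inequality decomposes $\|\score[U_n](U_n)\|_2^2$ into a diagonal piece $\tfrac{1}{n}\|\score[X_1](X_1)\|_2^2\,\E[1/A_1^2]\to \|\score[X_1](X_1)\|_2^2/(4\mu^2)$ (using $\E[1/A_1^2]\sim n/(4\mu^2)$ since $\sum_{j\ne 1}X_j\approx(n-1)\mu$) plus $n(n-1)$ cross terms; conditioning each cross term on the shared variables $W=\sum_{j\notin\{i,k\}}X_j$ and applying the defining identity $\E[\score[X_i](X_i)f(X_i)] = -\E[f'(X_i)]$ with $f(x)=1/(x+W)$ reduces it to $\E[1/(X_i+W)^2]=O(1/n^2)$, giving an aggregate $O(1/n)$ correction. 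Conceptually, this symmetric averaging is a Blachman--Stam-type collapse reflecting that the Hoeffding linearization $U_n \approx \frac{2\mu}{\sqrt{n}}\sum_i(X_i-\mu)$ has Fisher information of order $1/n$.

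For the second-order score, I split the sample into two disjoint halves and decompose $U_n = S^{(1)} + S^{(2)} + \Delta$, where $S^{(1)},S^{(2)}$ are within-half U-statistic contributions (hence independent) and $\Delta$ is a cross-term. After a conditional-independence argument absorbing $\Delta$ as a controlled perturbation of $S^{(1)}+S^{(2)}$, the bound $\|I_{U_n}(U_n)\|_2 \le \|\score[S^{(1)}](S^{(1)})\|_2\,\|\score[S^{(2)}](S^{(2)})\|_2 = O(1)$ follows from \cref{eq:second-order-score-bound}, with each factor $O(1)$ by the averaging argument applied within each half. For $\|U_n\score[U_n](U_n)\|_2$, Cauchy--Schwarz combined with $\|U_n\|_4 = O(1)$ (from standard U-statistic moment inequalities for bounded kernels) and the $L^4$ strengthening of the averaging argument (valid since $\score[X_1]\in L^4(X_1)$) gives $\|U_n\score[U_n](U_n)\|_2 \le \|U_n\|_4\,\|\score[U_n](U_n)\|_4 = O(1)$. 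Substituting all three $O(1)$ bounds and $\|u(X_1,X_2)/\sigma\|_4^2 = \E[X_1^4]/\sigma^2$ into \cref{exact_comp} and discarding lower-order terms then yields the stated $\lessapprox$ bound.
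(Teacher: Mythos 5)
Your high-level plan — rescale the kernel by $\sigma$, check $4\Cov(u/\sigma,\cdot)=1$, use $R=b^2/\sigma$, compute $\|u/\sigma\|_4^2=\E[X_1^4]/\sigma^2$, and feed the three $O(1)$ score bounds into \cref{ustattheorem} — matches the paper. Your first-order score derivation also matches: both you and the paper use the affine dependence of $U_n$ on each coordinate to write $\score[U_n](U_n)$ as a conditional expectation of $\sum_i \score[X_1](X_i)/S_{-i}$ up to normalization. To bound its $L^2$ norm you expand diagonal plus $n(n-1)$ cross terms and kill each cross term by integrating by parts against the shared-variable conditional density, whereas the paper instead applies the Efron--Stein inequality to the full symmetric sum (exploiting $\E[\score[X_1](X_1)\mid\X_{-1}]=0$). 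Both routes are legitimate and give the same order; Efron--Stein buys a shorter argument with fewer cross-term cases, while your direct expansion makes the variance cancellation more explicit.

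The genuine gap is in your second-order score bound. You propose to split the sample into halves and write $U_n = S^{(1)}+S^{(2)}+\Delta$, then invoke \cref{eq:second-order-score-bound} after ``a conditional-independence argument absorbing $\Delta$.'' That lemma requires $S$ to \emph{equal in distribution} $S_1+S_2$ with $S_1\indep S_2$, and your decomposition does not come close to satisfying this: for $u(x,y)=xy$ the cross-block term $\Delta \propto \bigl(\sum_{i\in H_1}X_i\bigr)\bigl(\sum_{j\in H_2}X_j\bigr)$ is the \emph{dominant} contribution to $U_n$, of the same order as $S^{(1)}+S^{(2)}$, and it depends on both halves, so neither $S^{(1)}+\Delta$ nor $S^{(2)}+\Delta$ is independent of the other piece. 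There is no small perturbation to absorb, and ``conditional independence'' is asserted rather than established — this is precisely the hard part. The paper avoids this route entirely: it performs a second integration by parts to exhibit $I_{U_n}(U_n)$ explicitly as $\E\bigl[\sum_{i\ne j}\score[X_1](X_j)\bigl(\score[X_1](X_i)-1/S_{-j}\bigr)(n-1)/(S_{-i}S_{-j})\mid U_n\bigr]$ and then controls it directly via Jensen, Cauchy--Schwarz, the a.s. lower bound $X_1\ge a$, and the Marcinkiewicz--Zygmund inequality (\cref{dino_veg}). To repair your argument you would need either that explicit second-order computation or a decomposition of $U_n$ into genuinely independent summands (e.g.\ via the Hoeffding linearization), neither of which you carry out. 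Your $\|U_n\score[U_n](U_n)\|_2$ step via $\|U_n\|_4\|\score[U_n](U_n)\|_4$ matches the paper, but the $\|\score[U_n](U_n)\|_4=O(1)$ claim again relies on an ``$L^4$ strengthening'' that you do not spell out (the paper does it via \cref{dino_veg} applied to the fourth moment).
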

\subsection{Application: Efficient Concentration for U-statistics}\label{sec:concentration}

As a second application of \cref{general_bound}, we develop efficient concentration inequalities for U-statistics. 
Let $\Phi$ represent the cumulative distribution function of a standard Gaussian and $\Phi^c = 1-\Phi$. 
For an asymptotically normal sequence $(S_n)_{n \geq 1}$ with $\E(S_n) = 0$ and $\Var(S_n) = 1$, 
\citet{austern2022efficient} defined an efficient concentration inequality as an upper bound on $\P(S_n > u)$ that converges to the asymptotically exact value $\Phi^c(u)$ 
as $n \to \infty$.  
The authors also derive such inequalities for scaled sums of independent variables.
Our Hellinger bounds enable efficient concentration for more general random variables via the following  relation, proved in \cref{proof_concentration}.

\begin{lemma}[Hellinger concentration]\label{concentration} 
If $S$ has a Lebesgue density, then, for all $u\in\reals$, %
\begin{talign}
|\sqrt{\P(S > u)} - \sqrt{\Phi^c(u)}|
    \vee
|\sqrt{\P(|S| > u)} - \sqrt{2\Phi^c(u)}|
&\leq
\sqrt{2}H(S,Z).
\end{talign}
\end{lemma}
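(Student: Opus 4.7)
The plan is to reduce both claimed inequalities to a single application of the reverse triangle inequality in $L^2(\reals, dx)$. The key identity is that, for any measurable $A\subseteq\reals$, the Lebesgue-density representation gives $\sqrt{\P(S\in A)}=\|\mathbf{1}_A\sqrt{f_S}\|_{L^2(dx)}$ and $\sqrt{\P(Z\in A)}=\|\mathbf{1}_A\sqrt{\varphi}\|_{L^2(dx)}$, while the definition of Hellinger distance rearranges to $\|\sqrt{f_S}-\sqrt{\varphi}\|_{L^2(dx)}=\sqrt{2}\,H(S,Z)$.

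The core step is to combine the reverse triangle (Minkowski) inequality in $L^2$ with monotonicity of the $L^2$ norm under pointwise domination to deduce
\begin{talign*}
|\sqrt{\P(S\in A)}-\sqrt{\P(Z\in A)}|
    \leq
\|\mathbf{1}_A(\sqrt{f_S}-\sqrt{\varphi})\|_{L^2(dx)}
    \leq
\|\sqrt{f_S}-\sqrt{\varphi}\|_{L^2(dx)}
    =
\sqrt{2}\,H(S,Z)
\end{talign*}
for every measurable $A$. I would then specialize: choosing $A=(u,\infty)$ and using $\P(Z>u)=\Phi^c(u)$ yields the first stated bound, while choosing $A=\{x\in\reals : |x|>u\}$ and using $\P(|Z|>u)=2\Phi^c(u)$ for $u\ge 0$ yields the second (the case $u<0$ being immediate, as both $\P(|S|>u)$ and $\P(|Z|>u)$ equal $1$).

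There is no substantive obstacle here: the proof is essentially a two-line application of the reverse triangle inequality. The only thing worth verifying is that the integrals and densities above are all well-defined, which follows immediately from the hypothesis that $S$ admits a Lebesgue density together with the smoothness of $\varphi$.
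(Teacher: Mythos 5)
Your approach is correct and genuinely different from the paper's, and in my view cleaner. The paper writes $h = f_S/\varphi$, factors $h(Z)-1 = (\sqrt{h(Z)}-1)(\sqrt{h(Z)}+1)$, applies the triangle inequality and Cauchy--Schwarz to obtain
\begin{talign}
|\P(S>u)-\Phi^c(u)| \le \sqrt{2}\,H(S,Z)\big[\sqrt{\Phi^c(u)}+\sqrt{\P(S>u)}\big],
\end{talign}
and then implicitly divides both sides by $\sqrt{\Phi^c(u)}+\sqrt{\P(S>u)}$ (a difference-of-squares cancellation). Your route makes the underlying structure explicit up front: $\sqrt{\P(S\in A)}$ and $\sqrt{\P(Z\in A)}$ are $L^2(dx)$ norms of $\mathbf{1}_A\sqrt{f_S}$ and $\mathbf{1}_A\sqrt{\varphi}$, so the reverse triangle inequality and the contraction $\|\mathbf{1}_A g\|_2\le\|g\|_2$ finish the job in two lines, and the result holds uniformly over all measurable $A$ rather than just half-lines. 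That generality is a mild bonus: the same argument gives $|\sqrt{\P(S\in A)}-\sqrt{\P(Z\in A)}|\le\sqrt{2}H(S,Z)$ for any event $A$, which is the cleanest formulation of the lemma.

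One small flag on the $u<0$ handling of the second inequality. You write that the case is ``immediate, as both $\P(|S|>u)$ and $\P(|Z|>u)$ equal $1$,'' but the stated claim compares $\sqrt{\P(|S|>u)}$ to $\sqrt{2\Phi^c(u)}$, not to $\sqrt{\P(|Z|>u)}$, and for $u<0$ these differ: $\P(|Z|>u)=1$ while $2\Phi^c(u)>1$. In fact the second inequality as literally stated fails for $u<0$ --- take $S\eqdist Z$ and $u=-1$ to get $|\sqrt{1}-\sqrt{2\Phi^c(-1)}|\approx 0.297 > 0 = \sqrt{2}H(S,Z)$. So the correct reading is that the second inequality is meant for $u\ge 0$ (the paper's own proof also silently uses $\P(|Z|>u)=2\Phi^c(u)$, which requires $u\ge 0$). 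Your main argument handles $u\ge 0$ correctly; just drop the parenthetical about $u<0$, or replace $2\Phi^c(u)$ with $\P(|Z|>u)$ throughout that claim, and everything is tight.
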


For example, \cref{example,concentration} together immediately imply the following efficient concentration inequality for the chaos U-statistics $U_n$ of  \cref{example}.

\begin{theorem}[Efficient U-statistic concentration]\label{dependent_concentration}
Under the assumptions of \cref{example}, 
\begin{talign}
|\sqrt{\P(U_n > \sigma u)} - \sqrt{\Phi^c(u)}|
\lessapprox \frac{\log(n)}{\sqrt{n}}\frac{\sqrt{2}\E(X_1^4)}{\sigma^2}(8\sqrt{2}+2) 
\qtext{for all} u\in\reals.
\end{talign} 
\end{theorem}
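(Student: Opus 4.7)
The statement is flagged in the preceding paragraph as an immediate consequence of \cref{example} (the quantitative Hellinger CLT for chaos statistics) combined with \cref{concentration} (the Hellinger concentration bound), so my plan is simply to compose these two results. No deeper argument is required.

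First, I would apply \cref{concentration} to the standardized U-statistic $S\defeq U_n/\sigma$. The hypothesis of \cref{concentration} is that $S$ has a Lebesgue density, and this is inherited from $U_n$: under the assumptions of \cref{example}, $U_n$ admits first- and second-order score functions via \cref{ustattheorem}, so in particular $U_n$, and hence $U_n/\sigma$, has a density. Using the identity $\P(U_n/\sigma > u) = \P(U_n > \sigma u)$, \cref{concentration} gives
\begin{talign}
|\sqrt{\P(U_n > \sigma u)} - \sqrt{\Phi^c(u)}|
\leq \sqrt{2}\,H(U_n/\sigma, Z).
\end{talign}

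Second, I would substitute the Hellinger rate furnished by \cref{example},
\begin{talign}
H(U_n/\sigma, Z)\lessapprox \frac{\log(n)}{\sqrt{n}}\frac{\E[X_1^4]}{\sigma^2}(8\sqrt{2}+2),
\end{talign}
and collect the factor $\sqrt{2}$ to arrive at the stated bound.

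There is no real obstacle here: the substantive work was done in proving \cref{general_bound}, \cref{ustattheorem}, \cref{example}, and \cref{concentration}. The only piece of minor bookkeeping is verifying that multiplying both sides of a $\lessapprox$ relation by a fixed positive constant preserves the relation, which is immediate from its definition $a_n\lessapprox b_n \iff a_n\leq b_n + o(b_n)$ together with the fact that the right-hand side $b_n=(\log n)/\sqrt{n}$ is of nonzero leading order.
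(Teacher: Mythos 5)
Your proof is correct and is exactly the composition the paper intends: the paper states \cref{dependent_concentration} as an immediate consequence of \cref{example} and \cref{concentration} and offers no separate argument. Applying \cref{concentration} to $S=U_n/\sigma$ and then substituting the Hellinger rate from \cref{example} is precisely the paper's route.
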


\subsection{Related Work} 
For sums of \iid observations, \citet{barron1986entropy} established the first entropic CLT using the de Bruijn identity.
Rates were then established in various \Renyi divergences by  \citet{artstein2004rate,ledoux2015stein,bobkov2014berry,bobkov2013rate,goldfeld2020limit,yao2025symmetric,bobkov2025central}. See \cite{bobkov2025renyi} for a recent survey of those results. To the best of our knowledge, no rates for the Hellinger CLT have previously been established for U-statistics or other dependent sums. %
Stein's method has been used to bound the Kolmogorov and Wasserstein distances to Gaussianity with $O(\frac{1}{\sqrt{n}})$ rates for U-statistics \citep{ross2011fundamentals}. %
However, neither distance controls the Hellinger distance without additional assumptions.  When the characteristic function $\phi_S$ of $S$ satisfies $\int t|\phi_S(t)|dt<\infty$,  \citet{pratelli2018convergence,janson2024quantitative} bound the total variation (TV) distance $d_{TV}$ by the Kolmogorov distance. Together these results yield suboptimal $O(\frac{1}{n^{1/4}})$ Hellinger rates as $H^2(S,Z)\le d_{TV}(S,Z)$. %
We control the Hellinger distance to Gaussianity using score function conditions and establish the first $O(\frac{\log n}{\sqrt{n}})$ rates for non-degenerate U-statistics.

\section{\pcref{general_bound}}
\label{proof_general_bound}

We begin by establishing the  inequality \cref{eq:prelim_hellinger_bound}.
Fix any $T<\infty$. 
By definition we have
\begin{talign}
H(S,Z_T)^2
    &=
1-\int_{\Omega_S}\sqrt{f_S(x)}\sqrt{f_{Z_T}(x)}dx.
\end{talign}
Fix any 
$x\in \Omega_S$.
Since 
the function $t\rightarrow \sqrt{f_{Z_t}(x)}$ is 
continuous on $[0,\infty)$ and continuously  differentiable on $(0,\infty)$ 
by \cref{espoir},
the fundamental theorem of calculus \citep[Thm.~1.32]{knapp2007basic}
implies that $\int_0^T\partial_t\sqrt{f_{Z_t}(x)}dt=\sqrt{f_{Z_T}(x)}-\sqrt{f_S(x)}$. 
Moreover, since $\int_{0}^{T}\int_{\Omega_S}\sqrt{f_S(x)}|\partial_t\sqrt{f_{Z_t}(x)}|dx\,dt < \infty$ by \cref{marreee}, Fubini's theorem \citep[Cor.~13.9]{Schilling_2005} implies that
\begin{talign}
    H(S,Z_T)^2&=-\int_{0}^{T}\int_{\Omega_S}\sqrt{f_S(x)}\partial_t\sqrt{f_{Z_t}(x)}dx\,dt.
\end{talign}
    
We will re-express this in terms of information-theoretic quantities. Write $h_t\defeq{f_{Z_t}/}{\varphi}$. By the Fokker-Planck equation \citep[Thm.~4.1]{pavliotis2014stochastic}, 
we have \begin{talign}\label{oma}\partial_t\sqrt{f_{Z_t}(x)}&=\frac{1}{2\sqrt{f_{Z_t}(x)}}\partial_t{f_{Z_t}(x)}
=\frac{\varphi(x)}{2\sqrt{f_{Z_t}(x)}}\partial_t\frac{f_{Z_t}(x)}{\varphi(x)}
=\frac{\varphi(x)}{2\sqrt{f_{Z_t}(x)}}\partial_th_t(x)\\
&=\frac{\varphi(x)}{2\sqrt{f_{Z_t}(x)}}[\partial_x^2{h_t}(x)-x\partial_x{h_t}(x)]
=\frac{f_{Z_t}(x)}{2\sqrt{f_{Z_t}(x)}}[\frac{\partial_x^2 h_t(x)}{h_t(x)}-x\frac{\partial_x h_t(x)}{h_t(x)}].
\end{talign}
Defining $\Delta_t \defeq \|I_{Z_t}(Z_t)-h_2(Z_t)+Z_t(\score[Z_t](Z_t)-h_1(Z_t))\|_2$, we therefore obtain that 
\begin{talign}
   H(S,Z_T)^2 &=-\int_{0}^{T}\int_{\Omega_S}\sqrt{f_S(x)}\frac{f_{Z_t}(x)}{2\sqrt{f_{Z_t}(x)}}[\frac{\partial_x^2 h_t(x)}{h_t(x)}-x\frac{\partial_x h_t(x)}{h_t(x)}]dt\\
   &=%
   -\int_0^{T}\E[\frac{\sqrt{f_S(Z_t)}}{2\sqrt{f_{Z_t}(Z_t)}}[\frac{\partial_x^2 h_t(Z_t)}{h_t(Z_t)}-Z_t\frac{\partial_x h_t(Z_t)}{h_t(Z_t)}]]dt
     \\&\overset {(a)}{=}-\frac{1}{2}\int_0^{T}\E[[\frac{\sqrt{f_S(Z_t)}}{\sqrt{f_{Z_t}(Z_t)}}-1][\frac{\partial_x^2 h_t(Z_t)}{h_t(Z_t)}-Z_t\frac{\partial_x h_t(Z_t)}{h_t(Z_t)}]]dt
      \\&\overset{(b)}{\le} \frac{1}{2} \sup_{t\le T}\|1-\frac{\sqrt{f_S(Z_t)}}{\sqrt{f_{Z_t}(Z_t)}}\|_2\int_0^{T}\Delta_t dt
     =  \sup_{t\leq T}\frac{1}{\sqrt{2}}H(S,Z_t)\int_0^{T}\Delta_t dt, \label{eq:HSST-bound}
\end{talign}
where (b) uses Cauchy-Schwarz and the identities
\begin{talign}\label{eq:derivloghtS}
\frac{\partial_x h_t(x)}{h_t(x)}&=\frac{\partial_xf_{Z_t}(x)}{f_{Z_t}(x)}+x 
=\score[Z_t](x)+x
=\score[Z_t](x)-\score[Z](x) 
\qtext{and} \\ 
\frac{\partial_x^2 h_t(x)}{h_t(x)}&=\frac{{\partial_x^2f^{S}_{t}}(x)}{f_{Z_t}(x)}-x^2+1+2x\frac{\partial_x h_t(x)}{h_t(x)}
\\&=I_{Z_t}(x)-I_Z(x)+2x(\score[Z_t](x)-\score[Z](x))
=I_{Z_t}(x)+x^2+1+2x\score[Z_t](x),\label{eq:secondderivloghtS}
\end{talign} 
and (a) follows from the identities 
\begin{talign}
\E[\frac{\partial_x^2 h_t(Z_t)}{h_t(Z_t)}-Z_t\frac{\partial_x h_t(Z_t)}{h_t(Z_t)}]\overset{(a_1)}=\E[I_{Z_t}(Z_t)+1+Z_t \score[Z_t](Z_t)]\overset{(a_2)}{=}0.
\end{talign}
The equality $(a_1)$ is a consequence of the identities \cref{eq:derivloghtS,eq:secondderivloghtS}, and $(a_2)$ follows as, by our score assumption \cref{eq:score-assumps}, we have $\E[I_{Z_t}(Z_t)]=0$ and $\E[Z_t \score[Z_t](Z_t)]+1=0$.

Finally, since, for each $x\in\Omega_S$, $f_{Z_t}(x) = \frac{\E[\varphi((x - Se^{-t})/\sqrt{1-e^{-2t})}]}{\sqrt{1-e^{-2t}}} \to \varphi(x)$ as $t\to\infty$ and $|f_{Z_t}(x)|\leq 1/\sqrt{1-e^{-2}}$ for all $t\geq 1$, the dominated convergence theorem \citep[Thm.~11.2]{Schilling_2005} and \cref{eq:HSST-bound} yield
\begin{talign}
&H(S,Z)^2
    =
H(S,Z_T)^2+\int_{\Omega_S}\sqrt{f_S(x)}(\sqrt{f_{Z_T}(x)}-\sqrt{\varphi(x)})dx \\
    &\leq
\limsup_{T\to\infty} (H(S,Z_T)^2+\int_{\Omega_S}\sqrt{f_S(x)}(\sqrt{f_{Z_T}(x)}-\sqrt{\varphi(x)})dx) \\
    &\leq
\sup_{T\geq 0} H(S,Z_T)^2+\int_{\Omega_S}\sqrt{f_S(x)}(\limsup_{T\to\infty}\sqrt{f_{Z_T}(x)}-\sqrt{\varphi(x)})dx 
    \leq
\half (\int_0^{\infty}\Delta_t dt)^2.
\end{talign}

To obtain the final bound \cref{eq:second_hellinger_bound}, 
we fix any $D\in(0,1)$ and $s>0$, %
decompose the integral
\begin{talign}
&\int_0^{\infty}\Delta_t dt
= \int_0^{-\frac{1}{2}\log(1-D)}\Delta_t dt
+\int_{-\frac{1}{2}\log(1-D)}^{\infty}\Delta_t dt
\defeq (b_1)+(b_2), 
\end{talign}
and bound each component separately. 

By \cref{It}, $\score[Z_t](Z_t)$ is the almost-surely unique %
 $\Lp[1]$ 
 random variable satisfying 
\begin{talign}\label{mdt2}
    \E[\score[Z_t](Z_t)\Psi(Z_t)]=-\E[\Psi'(Z_t)]
    \qtext{for all}
    \Psi\in \Cinfc.
\end{talign} 
Moreover, by the tower property and the independence of $Z$ and $S$, we have
\begin{talign}
\E[\E[\score(S)\mid Z_t]\Psi(Z_t)]
    =
\E[\score(S)\Psi(e^{-t}S+\sqrt{1-e^{-2t}}Z)]
    =
-e^{-t}\E[\Psi'(Z_t)],
    \ \ \forall
    \Psi\in \Cinfc.
\end{talign}
Therefore we must have
\begin{talign}\label{jst1}
\score[Z_t](Z_t)
    =
e^t\E[\score(S)\mid Z_t]
\qtext{almost surely.}
\end{talign}
Similarly, by \cref{It}, 
$I_{Z_t}(Z_t)$ is the almost-surely unique 
$\Lp[1]$ %
random variable 
satisfying 
\begin{talign}
    \E[I_{Z_t}(Z_t)\Psi(Z_t)]=\E[\Psi''(Z_t)]
    \stext{for all}
    \Psi\in \Cinfc.
\end{talign} 
Moreover, the tower property and independence of $Z$ and $S$, imply that, 
for all $\Psi\in C_c^{\infty}$, 
\begin{talign}
\E[\Psi(Z_t)e^{2t}\E[I_S(S)\mid Z_t]]&= e^{2t}\E[\Psi(Z_t)I_S(S)] 
    =\E[\Psi''(e^{-t}S+\sqrt{1-e^{-2t}}Z){}]
    =\E[\Psi''(Z_t){}].
\end{talign}
Hence we obtain that, almost surely, 
\begin{talign}\label{mdt}
I_{Z_t}(Z_t)&={e^{2t}}\E[I_S(S)\mid Z_t]
\stext{and} 
    I_{Z_t}(Z_t)+Z_t\score[Z_t](Z_t)=e^{t}\E[e^{t}I_S(S)+\score[S](S)Z_t\mid Z_t].
\end{talign}
Therefore, the tower property, Jensen's inequality, and the triangle inequality yield 
\begin{talign}
\|I_{Z_t}(Z_t)+1+Z_t\score[Z_t](Z_t)\|_2&= \|\E[e^{2t}I_S(S)+e^t\score[S](S)Z_t+1\mid Z_t]\|_2\\&\le \|e^{2t}I_S(S)+e^t\score[S](S)Z_t+1\|_2\\&\le 1+e^{2t}\|I_S(S)\|_2+\|\score[S](S)S\|_2+e^t\sqrt{1-e^{-2t}}\|\score[S](S)\|_2%
\end{talign} 
and hence 
\begin{talign}
(b_1)
     &\le-\frac{1}{2}\log(1-D)(1+\|\score[S](S)S\|_2)+\frac{D}{1-D}\|I_S(S)\|_2+\|\score[S](S)\|_2(\sqrt{\frac{D}{1-D}}-\rm{tan}^{-1}(\sqrt{\frac{D}{1-D}})).
\end{talign}

It remains to bound $(b_2)$. By \cref{It} and the tower property, $\score[Z](z)=-z$ and $I_Z(z)=z^2-1$ are first and second-order score functions for $Z$ satisfying, for all $\Psi\in  \Cinfc$, 
\begin{talign}
   \E[\Psi(Z_t)\frac{1}{\sqrt{1-e^{-2t}}}\E[Z\mid Z_t]] &=   \E[\Psi(Z_t)\frac{Z}{\sqrt{1-e^{-2t}}}]
   =\E[\Psi'(Z_t)] \qtext{and} \\
   \E[\Psi(Z_t)\frac{1}{{1-e^{-2t}}}\E[Z^2-1\mid Z_t]]&=    \E[\Psi(Z_t)\frac{Z^2-1}{{1-e^{-2t}}}]
   =\E[\Psi''(Z_t)].
\end{talign} 
 Using \cref{It} once more we therefore obtain the almost sure identities 
\begin{talign}
I_{Z_t}(Z_t)
    =
\E[\frac{1}{1-e^{-2t}}(Z^2-1)\mid Z_t]
    \qtext{and}
-\score[Z_t](Z_t)-Z_t
    =
\E[\frac{e^{-2t}}{\sqrt{1-e^{-2t}}}Z-e^{-t}S\mid Z_t].
\end{talign} 

To conclude we will make use of the following lemma proved in \cref{proof_tau_0}. 
\begin{lemma}[Hermite sum conditional means]\label{tau_0} 
 Under the assumptions of \cref{general_bound},  
 \begin{talign}
\E[\tilde\tau_{t,s}\mid Z_t]=0 
 \qtext{almost surely for all} 
 t \geq -\half\log(1-D).
\end{talign}
\end{lemma}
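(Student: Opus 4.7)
The plan is to show $\E[\tilde\tau_{t,s}e^{i\lambda Z_t}]=0$ for every $\lambda\in\reals$. Since $\tilde\tau_{t,s}\in L^2$ (by the remark following \cref{general_bound}, using \cref{eq:exchangeable-pair-assumption} and $\|h_k(Z)\|_2=\sqrt{k!}$), writing $g(Z_t)\defeq\E[\tilde\tau_{t,s}\mid Z_t]$ one sees that $\int g(z)e^{i\lambda z}f_{Z_t}(z)\,dz=\E[\tilde\tau_{t,s}e^{i\lambda Z_t}]=0$; since $f_{Z_t}>0$ on $\reals$ for $t>0$, Fourier uniqueness then yields $g\equiv 0$, i.e.\ the claim.

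To carry out the computation, I would apply the Hermite identity $\E[h_k(Z)e^{i\mu Z}]=(i\mu)^k e^{-\mu^2/2}$ with $\mu=\lambda\sqrt{1-e^{-2t}}$, a direct consequence of the generating function $\sum_k h_k(x)t^k/k!=e^{tx-t^2/2}$. Conditioning on $S$ and using the independence of $Z$ from $(\X,\X^t)$ yields
\begin{talign*}
\E\bigl[\E[(S^t-S)^{k-1}F(\X^t,\X)\mid S]\,h_k(Z)\,e^{i\lambda Z_t}\bigr]=(i\lambda\sqrt{1-e^{-2t}})^k e^{-\lambda^2(1-e^{-2t})/2}\E\bigl[(S^t-S)^{k-1}F(\X^t,\X)\,e^{i\lambda e^{-t}S}\bigr].
\end{talign*}
The factors $(\sqrt{1-e^{-2t}})^k$ exactly cancel the $\sqrt{1-e^{-2t}}^{-k}$ appearing in $\tilde\tau_{t,s}$. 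Summing over $k\ge 1$, interchanging sum and expectation (justified using \cref{eq:exchangeable-pair-assumption}), and recognizing the power series $\sum_{k\ge 1}\frac{(i\lambda e^{-t})^k(S^t-S)^{k-1}}{k!}\cdot e^{i\lambda e^{-t}S}=\frac{e^{i\lambda e^{-t}S^t}-e^{i\lambda e^{-t}S}}{S^t-S}$ then delivers
\begin{talign*}
\E[\tilde\tau_{t,s}e^{i\lambda Z_t}]=s\,e^{-\lambda^2(1-e^{-2t})/2}\,\E\!\left[F(\X^t,\X)\,\frac{e^{i\lambda e^{-t}S^t}-e^{i\lambda e^{-t}S}}{S^t-S}\right].
\end{talign*}

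The final step is the key structural observation that this integrand is \emph{anti-symmetric} under the exchangeable swap $\X\leftrightarrow\X^t$: the factor $F(\X^t,\X)$ flips sign by anti-symmetry of $F$, while the divided difference $(e^{i\lambda e^{-t}S^t}-e^{i\lambda e^{-t}S})/(S^t-S)$ is preserved (numerator and denominator each change sign). Exchangeability of $(\X,\X^t)$ therefore forces the expectation to vanish, giving $\E[\tilde\tau_{t,s}e^{i\lambda Z_t}]=0$ as required.

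The main technical obstacle is the summation--expectation interchange. For the bounded setting of \cref{ustat}, the pointwise estimate $|(e^{i\lambda e^{-t}S^t}-e^{i\lambda e^{-t}S})/(S^t-S)|\le|\lambda|e^{-t}$ together with essential boundedness of $F(\X^t,\X)$ makes Fubini's theorem immediate. In greater generality one passes through the Hermite truncations $\tilde\tau_{t,s}^{(N)}$: the hypothesis \cref{eq:exchangeable-pair-assumption} combined with $\|h_k(Z)\|_2=\sqrt{k!}$ gives $\tilde\tau_{t,s}^{(N)}\to\tilde\tau_{t,s}$ in $L^2$, so since $|e^{i\lambda Z_t}|\le 1$ one has $\E[\tilde\tau_{t,s}e^{i\lambda Z_t}]=\lim_N\E[\tilde\tau_{t,s}^{(N)}e^{i\lambda Z_t}]$, and the limit is identified with the anti-symmetric expression above by a complementary dominated-convergence argument on the Taylor series of $e^{i\lambda e^{-t}(S^t-S)}$.
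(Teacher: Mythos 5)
Your argument is correct, and it follows a genuinely different route from the paper. The paper tests $\tilde\tau_{t,s}$ against arbitrary $\Psi\in\Cinfc$, invoking the Mehler-type identity $(P_t\Psi)^{(k)}(S)=\E[h_k(Z)\Psi(Z_t)\mid S]/(e^{kt}\sqrt{1-e^{-2t}}^k)$ from \citet{bonis2020stein}, then the analyticity of $(P_t\Psi)'$ to resum the $k\ge 2$ terms into the Taylor increment $(P_t\Psi)'(S^t)-(P_t\Psi)'(S)$, and finally deduces $\E[\tilde\tau_{t,s}\mid Z_t]=0$ from $\E[\tilde\tau_{t,s}\Psi(Z_t)]=0$ via the uniqueness clause of \cref{It}. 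You instead test against the complex exponentials $\Psi_\lambda=e^{i\lambda\cdot}$, which are not in $\Cinfc$; this costs you the direct appeal to \cref{It}, but you correctly recover the conclusion through a more elementary Fourier-uniqueness step: $\E[\tilde\tau_{t,s}\mid Z_t]\cdot f_{Z_t}\in L^1(\reals)$ has vanishing Fourier transform and $f_{Z_t}>0$, so the conditional expectation vanishes $Z_t$-a.s. In exchange you gain a closed-form Hermite moment $\E[h_k(Z)e^{i\mu Z}]=(i\mu)^ke^{-\mu^2/2}$, eliminating the need for the Mehler identity and the analyticity lemma; the whole Hermite sum collapses to the single divided-difference expression $F(\X^t,\X)\bigl(e^{i\lambda e^{-t}S^t}-e^{i\lambda e^{-t}S}\bigr)/(S^t-S)$, which is manifestly anti-symmetric under the exchangeable swap, so the final cancellation (which the paper splits across the $k=1$ and $k\ge 2$ contributions) appears in one step. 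Both approaches invoke \cref{eq:exchangeable-pair-assumption} and $\twonorm{h_k(Z)}\le\sqrt{k!}$ to justify swapping sum and expectation, and both ultimately rest on the same anti-symmetry-plus-exchangeability mechanism; yours packages that mechanism more transparently at the cost of the extra Fourier-inversion step and the need to handle the divided difference by continuity on the event $\{S^t=S\}$, where it should be read as $i\lambda e^{-t}e^{i\lambda e^{-t}S}$ (a symmetric quantity, so the anti-symmetry argument still applies there).
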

By \cref{tau_0} we have 
$\E[\tilde\tau^{S}_t\mid Z_t]=0$ almost surely. 
Thus, with probability $1$,
\begin{talign}
I_{Z_t}(Z_t)
    &-I_{Z}(Z_t)
=\E[\frac{Z^2}{1-e^{-2t}}-(Z_t)^2\mid Z_t]-\frac{e^{-2t}}{1-e^{-2t}}
  \\&=
\E[\frac{Z}{\sqrt{1-e^{-2t}}}(\frac{Z}{\sqrt{1-e^{-2t}}}-Z_t)+Z_t(\frac{Z}{\sqrt{1-e^{-2t}}}-Z_t)\mid Z_t]-\frac{e^{-2t}}{1-e^{-2t}}
        \\
    &=
\E[\frac{Z}{\sqrt{1-e^{-2t}}}(\frac{e^{-2t}}{\sqrt{1-e^{-2t}}}Z-e^{-t}S)\mid Z_t]-\frac{e^{-2t}}{1-e^{-2t}}+Z_t\E [(\frac{e^{-2t}}{\sqrt{1-e^{-2t}}}Z-e^{-t}S)\mid Z_t] \\
&= \E[\frac{e^{-2t}}{{1-e^{-2t}}}(Z^2-1)-\frac{e^{-t}}{\sqrt{1-e^{-2t}}}SZ - \tilde\tau_{t,s}\mid Z_t]+Z_t(-\score[Z_t](Z_t)-Z_t),
\end{talign}
and Jensen's inequality therefore implies that
\begin{talign}
(b_2)
&= \int_{-\frac{1}{2}\log(1-D)}^{\infty}\|\E[\frac{e^{-2t}}{{1-e^{-2t}}}(Z^2-1)-\frac{e^{-t}}{\sqrt{1-e^{-2t}}}SZ\mid Z_t]\|_2dt
\\&\le \int_{-\frac{1}{2}\log(1-D)}^{\infty}\|\frac{e^{-2t}}{{1-e^{-2t}}}(Z^2-1)-\frac{e^{-t}}{\sqrt{1-e^{-2t}}}SZ-\tilde \tau_{t,s}\|_2dt 
\end{talign}
as
$\|\tilde \tau_{t,s}\|_2\le
\sum_{k=1}^{\infty}\frac{se^{-kt}k}{\sqrt{k-1}!\sqrt{1-e^{-2t}}^k}\|\E[(S^t-S)^{k-1}F(\X^t,\mathbf{X})\mid S]\|_2< \infty$
by the independence of $Z$ and $S$ and 
the Hermite polynomial inequality  $\twonorm{h_k(Z)}\leq \sqrt{k!}$ \citep[Lem.~3]{bonis2020stein}.
\subsection{\pcref{tau_0}}\label{proof_tau_0}
Fix any $\Psi\in \Cinfc$, define $(P_t\Psi)(x)\defeq \E[\Psi(e^{-t}x+\sqrt{1-e^{-2t}}Z)]$, and, for each $\psi \in \Cinfc$, let $\psi^{(k)}$ denote its $k$-th derivative. 
Since $S\indep Z$, 
the Hermite polynomials satisfy  $(P_t\Psi)^{(k)}(S)=\frac{\E[h_k(Z)\Psi(Z_t)\mid S]}{e^{kt}\sqrt{1-e^{-2t}}^k}$ almost surely for each $k$ by \citet[(16)]{bonis2020stein}. Hence, we may invoke Fubini's theorem \citep[Cor.~13.9]{Schilling_2005} (twice), the tower property (twice), the analyticity of $(P_t\Psi)'$  (by \citet[(24)]{bonis2020stein} and \citet[Prop. 1.2.12]{krantz2002primer}), the exchangeability of $(\X,\X^t)$, and the anti-symmetry of $F$ to find that
\begin{talign}
    &\E \left[ \sum_{k \ge 2}  \frac{ \E[F(\X^t,\mathbf{X})(S^t-S)^{k-1}\mid S] }{(k-1)! e^{kt} \sqrt{1 - e^{-2t}}^{k}} h_k(Z)\Psi(Z_t)   \right] 
    \overset{(a)}{=}
    \sum_{k \ge 2}\E \left[  \frac{\E[F(\X^t,\mathbf{X}) (S^t-S)^{k-1} \mid S] \E[h_k(Z)\Psi(Z_t)\mid S]}{(k-1)! e^{kt}\sqrt{1 - e^{-2t}}^{k}}    \right]
    \\
    &= \sum_{k \ge 2} \E [  F(\X^t,\mathbf{X})  \frac{ (S^t-S)^{k-1}}{(k-1)! } (P_t \Psi)^{(k)}(S) ]  
    \overset{(b)}{=} \E [  F(\X^t,\mathbf{X})  \sum_{k \ge 2} \frac{ (S^t-S)^{k-1}}{(k-1)! } (P_t \Psi)^{(k)}(S) ] \\
    &=
       \E  \left[F(\X^t,\mathbf{X}) ( (P_t \Psi)'(S^t) - (P_t \Psi)'(S))   \right] 
    =  -2\E  \left[ F(\X^t,\mathbf{X}) (P_t \Psi)'(S^t)   \right].
\end{talign}
Using Fubini's theorem to swap expectation and sum in (a) and (b) is justified as
\begin{talign}\label{eq:integrable_tautilde}
\sum_{k=1}^{\infty}\frac{e^{-kt}}{(k-1)!\sqrt{1-e^{-2t}}^k}\E[\E[|S^t-S|^{k-1}|F(\X^t,\mathbf{X})|\mid S]\, |h_k(Z)|\, |\Psi(Z_t)|] < \infty
\end{talign}
by the boundedness of $\Psi$, the independence of $Z$ and $S$,  
the Hermite polynomial inequality  $\E[|h_k(Z)|]\leq \sqrt{k!}$ \citep[Lem.~3]{bonis2020stein}, and  \cref{eq:exchangeable-pair-assumption}. 
Parallel reasoning implies that 
\begin{talign} &%
\E \left[ \E[F(\X^t,\mathbf{X})\mid S] h_1(Z)\Psi(Z_t)\right]
    = %
    e^t \sqrt{1-e^{-2t}}\E\left[   F(\X^t,\mathbf{X}) (P_t \Psi)'(S)\right].
\end{talign}
Hence, for all $\Psi\in  \Cinfc$,  $\E[\tilde\tau_{t,s} \Psi(Z_t)]=0$.
Since $\E[\tilde \tau_{t,s}\mid Z_t] \in L^1$ by \cref{eq:integrable_tautilde} with $\Psi\equiv 1$, \cref{It} implies that $\E[\tilde \tau_{t,s}\mid Z_t] =0$ almost surely.%
\section{\pcref{ustattheorem}}\label{proof_ustattheorem}
\cref{ustattheorem} follows from the following more detailed statement with $D = 1$.
A yet tighter bound could be obtained by optimizing over $D$, which trades off the sizes of $\beta$ and $\beta_0$.

\begin{theorem}[Detailed Hellinger CLT for U-statistics]\label{exact_V}
Under the assumptions of \cref{ustattheorem},  fix  
any $D\in(0,\sqrt{n})$ and define
$\kappa\defeq \frac{R^2}{D\sqrt{n}}$ and 
$B_{n} 
    \defeq  (\frac{8}{n-1}+\frac{16+2\sqrt{2}}{\sqrt{n-1}})\|u(X_1,X_2)\|_4^2$.
Then
\begin{talign}  \label{exact_comp}
\sqrt{2}&  H(U_n,Z)
    \le %
\beta + 
\beta_{0}
\qtext{for}
    \\
\beta
    &\defeq
\frac{3R^4\alpha  e^{2\kappa}}{\sqrt{n}}(\frac{\sqrt{1-\frac{D}{\sqrt{n}}}}{D}
    +
\frac{\log(\frac{4\sqrt{n}}{D})}{2\sqrt{n}})
    +
\frac{6\alpha R^3 e^{2\kappa}}{(n\kappa)^{3/4}\sqrt{D}\sqrt{n-1}}
    +
\frac{2\sqrt{3}\alpha R^3e^{2\kappa}}{n^{3/4}\sqrt{D}}
    +
\frac{{B_{n}}\log(\frac{n}{D^2})}{2\sqrt{2}}
    \qtext{and}\\
\beta_{0} 
    &\defeq
-\frac{1}{2}\log(1-\frac{D}{\sqrt{n}})(1+\|\score[U_n](U_n)U_n\|_2)
   +\frac{D}{\sqrt{n}-D}\|I_{U_n}(U_n)\|_2+\frac{1}{3}(\frac{D}{\sqrt{n}-D})^{3/2}\|\score[U_n](U_n)\|_2. %
\end{talign}
\end{theorem}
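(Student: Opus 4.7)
The plan is to invoke \cref{general_bound} with the specific U-statistic choices $(\X^t, F)$ from \cref{eq:pair_choices}, the parameter $D_{\mathrm{gen}}= D/\sqrt{n}$ (noting that \cref{general_bound} requires its parameter in $(0,1)$), and $s=n$. Since $|u|\le R$, both $|S^t-S|$ and $|F(\X^t,\X)|$ are bounded by $O(R/\sqrt{n})$, so the integrability condition \cref{eq:exchangeable-pair-assumption} is immediate. The first three summands of \cref{eq:second_hellinger_bound} produce $\beta_0$ directly: the first two match by substituting $D_{\mathrm{gen}}=D/\sqrt{n}$, while the $\score[S]$ term is bounded using the elementary inequality $\sqrt{x}-\tan^{-1}(\sqrt{x})\le x^{3/2}/3$ (from the Taylor expansion of $\tan^{-1}$) evaluated at $x=D/(\sqrt{n}-D)$. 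Thus the entire burden is to bound the remaining integral by $\beta$.

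The crucial structural identity is the exchangeable-pair regression $n\,\E[F(\X^t,\X)\mid\X]=-U_n$. This I would establish by a direct computation: expand $g(\X^t)-g(\X)=\frac{2}{\sqrt{n}(n-1)}\sum_{j\neq I}(u(X_I',X_j)-u(X_I,X_j))$, average over $I\sim\mathrm{Unif}([n])$ and the independent copy $X_I'$, and observe that the linearization $g_1$ in the definition of $F$ was chosen exactly so that the resulting conditional mean telescopes to $-U_n/n$. Combined with $s=n$, this makes the $k=1$ term of $\tilde\tau_{t,n}$ cancel the $\frac{e^{-t}}{\sqrt{1-e^{-2t}}}U_n h_1(Z)$ contribution in the integrand pointwise in $t$. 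So after the cancellation, the integrand reduces to
\begin{talign*}
\bigl\|\tfrac{e^{-2t}}{1-e^{-2t}}h_2(Z)-\textstyle\sum_{k\ge 2}\tfrac{n\,e^{-kt}}{k!(1-e^{-2t})^{k/2}}\E[(S^t-S)^{k-1}F\mid S]\,h_k(Z)\bigr\|_2.
\end{talign*}

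I would then split this residual into the $k=2$ contribution and the tail $k\ge 3$. For $k=2$, the relevant quantity is $\|1-\frac{n}{2}\E[(S^t-S)F\mid\X]\|_2$, which is the Stein-kernel error: expanding $\frac{n}{2}\E[(g(\X^t)-g(\X))F\mid\X]$ produces sums of $u$-products that match $1$ on average thanks to the normalization $4\,\Cov(u(X_1,X_2),u(X_1,X_3))=1$, and the $L^2$ deviation is controlled by $\|u(X_1,X_2)\|_4^2$, giving the $B_n\log(n/D^2)/(2\sqrt{2})$ term after integrating $\int\frac{e^{-2t}}{1-e^{-2t}}dt$ over $[-\frac12\log(1-D/\sqrt{n}),\infty)$ (where the logarithm arises from the lower endpoint). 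For $k\ge 3$, use the almost-sure bounds $|S^t-S|,|F|\le cR/\sqrt{n}$ together with the Hermite moment inequality $\|h_k(Z)\|_2\le\sqrt{k!}$; this gives a geometric series in $k$ with ratio proportional to $\tfrac{R}{\sqrt{n}}\cdot\tfrac{e^{-t}}{\sqrt{1-e^{-2t}}}$, which on the integration domain is bounded by $\sqrt{\kappa}\cdot e^{-t}\cdot(\text{const})$. Summing yields the factor $e^{2\kappa}$, and integrating in $t$ yields the $R^3, R^4$ powers in $\beta$.

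The main obstacle is the bookkeeping at the lower endpoint $t=-\frac12\log(1-D/\sqrt{n})$: there the factor $(1-e^{-2t})^{-k/2}$ is largest, so the geometric-series argument in $k$ only closes because $\kappa = R^2/(D\sqrt{n})$ was chosen exactly to make the common ratio strictly less than a fixed fraction, producing $e^{2\kappa}$ rather than a divergent sum. Tracking the sharp numerical constants $3\alpha$, $2\sqrt{3}\alpha$, etc., requires carefully splitting the $k\ge 3$ tail into a few sub-series (single-$k$ leading term versus the remainder), each integrated in $t$ using the substitution $u=1-e^{-2t}$. Once these pieces are collected, the bound $\sqrt{2}H(U_n,Z)\le\beta+\beta_0$ follows, and the statement of \cref{ustattheorem} is recovered by specializing $D=1$.
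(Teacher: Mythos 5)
Your proposal follows the paper's own three-part structure essentially verbatim: invoke \cref{general_bound} with $D_{\mathrm{gen}}=D/\sqrt{n}$, $s=n$, and the pair selections of \cref{eq:pair_choices} (yielding $\beta_0$ via the $x-\tan^{-1}x\le x^3/3$ estimate), establish the exchangeable-pair moment estimates of \cref{pair_moment_bounds} (the $k=1$ cancellation from $n\E[F\mid\X]=-U_n$, the $k=2$ Stein-kernel deviation controlled by $B_n$, and the bounded-difference tail for $k\ge 3$), and integrate term-by-term as in \cref{porcel}. One small inaccuracy: the $k\ge 3$ tail is not a geometric series whose convergence hinges on $\kappa$ being small enough; because of the $\sqrt{k!}$ from $\|h_k(Z)\|_2\le\sqrt{k!}$, the sum $\sum_k (2\sqrt{\kappa})^k/\sqrt{k!}$ converges for \emph{every} $\kappa>0$, and Stirling's approximation (as in the paper) is what converts it into the explicit $e^{2\kappa}$ factor; the choice $\kappa=R^2/(D\sqrt{n})$ serves to make that factor tend to $1$, not to guarantee convergence.
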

\begin{proof}
Our result follows directly from three lemmas. The first
uses \cref{general_bound} to bound $H(U_n, Z)$ in terms of the exchangeable pair selections of \cref{eq:pair_choices}. %

{\begin{lemma}[Exchangeable pair bound on $H(U_n,Z)$]\label{lemma_2}
Instantiate the assumptions of \cref{exact_V}, 
and, for each $t>0$, define the Hermite sum $\tilde\tau_{t,s}$ as in \cref{general_bound} with 
the $g$, $F$, $\X$, and $\X^t$ selections of 
\cref{eq:pair_choices} and $s=n$. Then
\begin{talign}\label{exchangeable_pair_hellinger_bound}
\sqrt{2}  H( U_n,Z)
  &\le 
\int_{-\frac{1}{2}\log(1-\frac{D}{\sqrt{n}})}^{\infty}\|\frac{e^{-2t}}{{1-e^{-2t}}}(Z^2-1)-\frac{e^{-t}}{\sqrt{1-e^{-2t}}}U_nZ-\tilde\tau_{t,s}\|_2\,dt
+ \beta_0.
\end{talign}
\end{lemma}}
\begin{proof}
We simply invoke \cref{general_bound} and the estimate $x-\tan^{-1}(x)\leq {x^3}{/3}$ for $x\geq 0$.
\end{proof}

The second, proved in \cref{proof_pair_moment_bounds}, bounds several relevant exchangeable pair quantities. %
\begin{lemma}[Exchangeable pair moment bounds]\label{pair_moment_bounds}%

Under the remaining assumptions of \cref{lemma_2},
for any $t\ge-\frac{1}{2}\log(1-\frac{R^2}{\kappa n})$  with $c(t)\defeq\sqrt{\kappa(1-e^{-2t})}$, 
assumption \cref{eq:exchangeable-pair-assumption} holds,
\begin{talign}\label{second}
&\|n\E[F(\X^t,\X)\mid U_n]+U_n\|_{2}=0,\quad
   \|\frac{n}{2}\E[F(\X^t,\X)(g(\X^t)-g(\X))\mid U_n]-1\|_{2}
    \le 
B_{n}, \\
\label{higher}
&n\|\E[F(\X^t,\X)(g(\X^t)-g(\X))^{k-1}\mid U_n]\|_{2}
     \le  
\frac{2^k3c(t)^{k-4}R^4}{2n}, \stext{$\forall$ even $k\ge 4$, and}\\&n\|\E[F(\X^t,\X)(g(\X^t)-g(\X))^{k-1}\mid U_n]\|_{2}
     \le  
\frac{2^k3c(t)^{k-3}R^3}{2n}(1+\frac{\sqrt{k}}{\sqrt{n-1}}), \stext{$\forall$ odd $k\ge 3$.}
\end{talign}
\end{lemma}

The third, proved in \cref{proof_porcel}, bounds the large-time component of \cref{exchangeable_pair_hellinger_bound} using \cref{pair_moment_bounds}.
\begin{lemma}[Large-time Hellinger bound]
    \label{porcel}
Under the assumptions of \cref{lemma_2}, 
\begin{talign}&
\int_{-\frac{1}{2}\log(1-\frac{D}{\sqrt{n}})}^{\infty}\|\frac{e^{-2t}}{{1-e^{-2t}}}(Z^2-1)-\frac{e^{-t}}{\sqrt{1-e^{-2t}}} U_nZ-\tilde\tau_{t,s}\|_{2}dt
    \le
\beta.
    \end{talign}
\end{lemma}
\end{proof}

\subsection{\pcref{pair_moment_bounds}}\label{proof_pair_moment_bounds}
First, the independence of $(\X,(X_i')_{i=1}^n,I)$ and the tower property imply that
\begin{talign}
&\E\left[ g_1(\X^t)-g_1(\X) \mid \X \right] 
    = 
\frac{1}{\sqrt{n}}\E[u_1(X_I')  - u_1(X_I) \mid \X]
= - \frac{1}{n} g_1(\X)
\qtext{for}
u_1(x) \defeq \E[u(X_1,x)], \\
&\E[g(\X^t)-g(\X) \mid \X]
    =
\frac{2}{\sqrt{n}(n-1)}\E[\sum_{j\ne I }u(X_I',X_j) - u(X_I,X_j)\mid \X]
    = 
\frac{2}{n} g_1(\X) - \frac{2}{n} U_n, 
    \qtext{and}\\
&\E[F(\X^t,\X) \mid U_n]=\E[\E[F(\X^t,\X) \mid \X]\mid U_n] = -\frac{1}{n}U_n.
\end{talign}

This yields the first claim of \cref{second}.
To prove the second, we write
\begin{talign}&
    \E[(g(\X^t)-g(\X))F(\X^t,\X)\mid \X]
    \\&=   \frac{1}{2} \E[(g(\X^t)-g(\X))(g(\X^t)-g(\X))\mid \X]+    \E[(g(\X^t)-g(\X))(g_1(\X^t)-g_1(\X))\mid \X]\\&=\frac{2}{n^2(n-1)^2}\sum_{i\le n}\E[(\sum_{j\ne i}u(X'_i,X_j)-u(X_i,X_j))^2\mid \X]
    \\&+\frac{2}{n^2(n-1)}\sum_{i\le n}\sum_{j\ne i}\E[(u(X'_i,X_j)-u(X_i,X_j))(u_1(X'_i)-u_1(X_i))\mid \X]
    =A+B
\end{talign} 
and bound each term  successively. 
Defining $u_2 \defeq \E[u(X_1,X_2)]$, we find that
\begin{talign}
B
&=\frac{2}{n^2(n-1)}\sum_{i\le n}\sum_{j\ne i}\E[u(X'_i,X_j)u_1(X'_i)\mid X_i]-\frac{2}{n^2(n-1)}\sum_{i\le n}\sum_{j\ne i}u_1(X_j)u_1(X_i)
\\&\qquad-\frac{2}{n^2(n-1)}\sum_{i\le n}\sum_{j\ne i}u_2u(X_i,X_j)
+\frac{2}{n^2(n-1)}\sum_{1<i\le n}\sum_{j\ne i}u_1(X_i)u(X_i,X_j).
\end{talign}
Therefore, $n\mathbb{E}[B]=4(\E[u_1(X_1)^2]-u_2^2)$.
Moreover, the triangle inequality implies that
\begin{talign}
  &\!\!\!\!n  \|B-\E[B]\|_2
  \le B_1+B_2+B_3+B_4\defeq \frac{2}{n}\|\sum_{i\le n}\E[u(X'_1,X_i)u_1(X'_1)\mid X_i]-\E[u_1(X_1)^2]\|_2
  \\&+\frac{2}{n(n-1)}\|\sum_{\substack{i\le n\\j\ne i}}u_1(X_i)u(X_i,X_j)-\E[u_1(X_1)^2]\|_2
  \\&+\frac{2}{n(n-1)}\|\sum_{\substack{i\le n\\j\ne i}}u_1(X_i)u_1(X_j)-u_2^2\|_2+\frac{2}{n(n-1)}\|\sum_{\substack{i\le n\\j\ne i}}u_2u(X_i,X_j)-u_2^2\|_2.
\end{talign}

As the random variables $(X_i)_{i=1}^n$ are \iid, we have, by Jensen's inequality,
\begin{talign}
B_1
    &=
\frac{2}{\sqrt{n}}\sqrt{\Var(\E[u(X'_1,X_2)u_1(X_1')|X_2])}
    \le 
\frac{2}{\sqrt{n}}\|u(X_1,X_2)\|_4^2.
\end{talign}
Similarly, the triangle inequality, the \iid assumption, and Jensen's inequality imply
\begin{talign}
    B_2&\le
    \frac{2}{n(n-1)}\|\sum_{\substack{i\le n\\j\ne i}}u_1(X_i)u(X_i,X_j)-u_1(X_i)^2\|_2+  \frac{2}{n}\|\sum_{\substack{i\le n}}u_1(X_i)^2-\E(u_1(X_1)^2)\|_2    %
   \\&= \frac{2}{(n-1)}\|u_1(X_1)\sqrt{\Var(\sum_{{j>1}}u(X_1,X_j)|X_1)}\|_2+\frac{2}{\sqrt{n}}\sqrt{\Var(u_1(X_1)^2)}
    \\&\overset{(b)}{\le}\frac{2}{\sqrt{n-1}}\|u_1(X_1)\sqrt{\E[u(X_1,X_2)^2|X_1)}\|_2+\frac{2}{\sqrt{n}}\|u(X_1,X_2)\|_4^2
   \le \frac{4}{\sqrt{n-1}}\|u(X_1,X_2)\|_4^2.
\end{talign}
We use the same tools to bound the third term
\begin{talign}
  B_3
  &\le \frac{2}{n(n-1)}\|\sum_{\substack{i\le n}}u_1(X_i)\sum_{\substack{j\neq i}}(u_1(X_j)-u_2)\|_2+\frac{2(n-1)u_2}{n(n-1)}\|\sum_{\substack{i\le n}}(u_1(X_i)-u_2)\|_2 %
    \\
    &\le \frac{2}{n-1}\|u_1(X_1)\sum_{\substack{1<i\le n}}(u_1(X_i)-u_2)\|_2+\frac{2u_2}{\sqrt{n}}\sqrt{\Var(u_1(X_1))}
    \le\|u(X_1,X_2)\|^2_4\frac{4}{\sqrt{n-1}}.
\end{talign}
Finally, since $\Cov(u(X_i,X_j),u(X_{i_2},X_{j_2}))=0$ when all indices $(i,j,i_2,j_2)$ are distinct, 
\begin{talign}
  B_4&= \frac{2u_2}{n(n-1)}\sqrt{\Var(\sum_{i\le n,j\ne i}u(X_i,X_j))}
  {\le
  }\frac{4u_2\sqrt{n-2}}{\sqrt{n(n-1)}}\|u(X_1,X_2)\|_2
   \le \frac{4}{\sqrt{n}}\|u(X_1,X_2)\|_4^2.
\end{talign}
Hence, 
$n  \|B-\E[B]\|_2
\le \frac{14}{\sqrt{n-1}}\|u(X_1,X_2)\|_4^2$. 
Next, we note that 
\begin{talign}
&A
    =
A_1+A_2
    \defeq
\frac{2}{n^2(n-1)^2}\sum_{i\le n}\sum_{j\ne i}\E[u(X'_i,X_j)^2\mid \X]
    -2u_1(X_j)u(X_i,X_j)
    +u(X_i,X_j)^2 \\
&+\frac{2}{n^2(n-1)^2}\sum_{i\le n}\sum_{j_1\ne j_2\ne i}\E[(u(X'_i,X_{j_1})-u(X_i,X_{j_1}))(u(X'_i,X_{j_2})-u(X_i,X_{j_2}))\mid \X]
\end{talign}
so that 
$n \E[A]=\frac{4}{n-1}(\mathbb{E}[u(X_1,X_2)^2]-\E[u_1(X_1)^2])+4\frac{n-2}{n-1}(\E[u_1(X_1)^2]-u_2^2).$ 
Moreover, 
\begin{talign}&n\|A_1
-\frac{4}{n(n-1)}[\mathbb{E}[u(X_1,X_2)^2]-\E[u_1(X_1)^2]]\|_2
\le\frac{12}{n-1}\|u(X_1,X_2)\|_2^2
\end{talign}
by Jensen's inequality. 
Using the triangle inequality, we also obtain that
\begin{talign}
n&\|A_2-\frac{4(n-2)}{n(n-1)}[\E[u_1(X_1)^2]-u_2^2]\|_2 
    \le
A_{21}+A_{22}+A_{23}\\
    &\defeq\frac{2}{(n-1)^2}\|\sum_{j_1\ne j_2>1}\E[u(X'_1,X_{j_1})u(X'_1,X_{j_2})\mid \X]-\E[u_1(X_1)^2]\|_2
    \\&+\frac{4}{n(n-1)^2}\|\sum_{j_1\ne j_2\ne i}u_1(X_{j_1})u(X_{i},X_{j_2})-u_2^2\|_2
    \\&+\frac{2}{n(n-1)^2}\|\sum_{j_1\ne j_2\ne i}u(X_i,X_{j_1})u(X_{i},X_{j_2})-\E(u_1(X_i)^2)\|_2.
\end{talign}

Since, for all $f\in L^2$, $\Cov(f(X_i, X_{j_1}, X_{j_2}), f(X_{i_2}, X_{j_3}, X_{j_4}))=0$ for distinct $i,i_2,j_1,j_2,j_3,j_4$,  
\begin{talign}
A_{21}
    &= \frac{2}{(n-1)^2}\sqrt{\sum_{\substack{j_1\ne j_2>1\\j_3\ne j_4>1}}\Cov(\E[u(X'_1,X_{j_1})u(X'_1,X_{j_2})\mid \X],\E[u(X'_1,X_{j_3})u(X'_1,X_{j_4})\mid \X])}\\&\le \frac{4\sqrt{n(n-1)(n-2)}}{(n-1)^2}\|u(X_1,X_2)\|_4^2
    \le  \frac{4\sqrt{n}}{n-1}\|u(X_1,X_2)\|_4^2, \\
A_{22}
    &=
\frac{4}{n(n-1)^2}\sqrt{\Var(\sum_{j_1\ne j_2\ne i}u_1(X_{j_1})u(X_{i},X_{j_2})-u_2^2)}
    \le
\frac{12}{\sqrt{n-1}}\|u(X_1,X_2)\|_4^2,
    \qtext{and}\\
A_{23}
    &=
\frac{2}{n(n-1)^2}\sqrt{\Var(\sum_{j_1\ne j_2\ne i}u(X_i,X_{j_1})u(X_{i},X_{j_2})-\E[u_1(X_i)^2])}
    \leq
\frac{6}{\sqrt{n-1}}\|u(X_1,X_2)\|_4^2
\end{talign}
by Jensen's inequality.
Since $n\geq 2$, we have 
$n\|A-\E(A)\|_2\le\frac{4\|u(X_1,X_2)\|_4^2}{\sqrt{n-1}}[\frac{3}{\sqrt{n-1}}+\frac{9}{2}+\sqrt{2}].$
Finally, our assumption $4\Cov(u(X_1,X_2),u(X_1,X_3))=1$ yields the second claim of \cref{second}: 
\begin{talign}
\|\frac{n}{2}\E[&F(\X^t,\X)(g(\X^t)-g(\X))\mid U_n]-1\|_2
    \le 
\|\frac{n}{2}(B+A-\E[B+A])+\frac{2(\E[u(X_1,X_2)^2]-u_2^2)}{n-1}\|_2 \\
    &\le
\frac{\|u(X_1,X_2)\|_4^2}{\sqrt{n-1}}[\frac{6}{\sqrt{n-1}}+16+2\sqrt{2}]
    +
\frac{2}{n-1}\|u(X_1,X_2)\|_4^2.
\end{talign} 

Now, fix any $t\ge-\frac{1}{2}\log(1-\frac{R^2}{\kappa n})$ and odd $k\ge 3$.  
By the triangle inequality,
\begin{talign}
&\|\E[F(\X^t,\X)(g(\X^t)-g(\X))^{k-1}\mid U_n]\|_2   \\
    &\le  
\frac{1}{2}\|\E[(g(\X^t)-g(\X))^{k}\mid U_n]\|_2  + \|\E[(g_1(\X^t)-
 g_1(\X))(g(\X^t)-g(\X))^{k-1}\mid U_n]\|_2.
\end{talign} 
Using Jensen's inequality, we find that 
\begin{talign}&
    \|  \E[(g(\X^t)-g(\X))^{k}\mid U_n]\|_2
    \le\frac{2^k}{\sqrt{n}^kn(n-1)^k} \|  \sum_{i\le n}(\sum_{j\ne i}u(X'_i,X_j)-u(X_i,X_j))^{k}\|_2.
\end{talign}
Moreover, for any fixed indices $i_1\neq i_2$ we have 
\begin{talign}&
|\E[ (\sum_{j\ne i_1}u(X'_{i_1},X_j)-u(X_{i_1},X_j))^{k}(\sum_{j\ne i_2}u(X'_{i_2},X_j)-u(X_{i_2},X_j))^{k}]|
\\&\overset{(a)}{\le }\| (\sum_{j\ne i_1}u(X'_{i_1},X_j)-u(X_{i_1},X_j))^{k}-(\sum_{j\notin\{i_1,i_2\}}u(X'_{i_1},X_j)-u(X_{i_1},X_j))^{k}\|_2^2
\\&\overset{(d)}{\le }k 2^{2(k-1)}(n-1)^{2(k-1)}R^{2(k-1)}\| u(X_{i_1},X_{i_2})-u(X'_{i_1},X_{i_2}) \|_2^2
\le
k 2^{2k}(n-1)^{2(k-1)}R^{2k}. %
\end{talign} where (a) follows from $(X_i')_{i=1}^n$ being an independent copy of $\X$ and the implied identities
\begin{talign}
&\E[ (\sum_{j\notin\{i_1,i_2\}}u(X'_{i_1},X_j)-u(X_{i_1},X_j))^{k}(\sum_{j\ne i_2}u(X'_{i_2},X_j)-u(X_{i_2},X_j))^{k}]=0
    \qtext{and} \\
&\E[ (\sum_{j\notin\{i_1,i_2\}}u(X'_{i_1},X_j)-u(X_{i_1},X_j))^{k}(\sum_{j\notin\{i_1,i_2\}}u(X'_{i_2},X_j)-u(X_{i_2},X_j))^{k}]
    =0,
\end{talign}
while (b) uses the relation $|a^k-b^k|\le k|a-b|\max(|a|^{k-1},|b|^{k-1})$ for all $a,b\in\mathbb{R}$.
Hence,
\begin{talign}&
 \| \sum_{i\le n}(\sum_{j\ne i}u(X'_i,X_j)-u(X_i,X_j))^{k}\|^2_2
 \le\sum_{i\le n}\E[(\sum_{j\ne i}u(X'_i,X_j)-u(X_i,X_j))^{2k}] \\
&+\sum_{i_1\ne i_2}\E[ (\sum_{j\ne i_1}u(X'_{i_1},X_j)-u(X_{i_1},X_j))^{k}(\sum_{j\ne i_2}u(X'_{i_2},X_j)-u(X_{i_2},X_j))^{k}]
\\&\le 2^{2k} n(n-1)^{2k}\|u(X_1,X_2)^k\|^2_2+k2^{2k}n(n-1)^{2k-1}R^{2k}
\le 2^{2k} n(n-1)^{2k-1}[n-1+k]R^{2k}.
\end{talign}  
Putting the pieces together, we obtain the estimate
\begin{talign}&\label{eq:gboundodd}
  n  \|  \E[(g(\X^t)-g(\X))^{k}\mid U_n]\|_2
    \le\frac{1}{\sqrt{n}^{k-1}} 2^{k} [1+\frac{\sqrt{k}}{\sqrt{n-1}}]R^{k}
    \le\frac{c(t)^{k-3}}{n} 2^{k} [1+\frac{\sqrt{k}}{\sqrt{n-1}}]R^{3}.
\end{talign}
Parallel reasoning yields the bounds
\begin{talign}&\label{eq:g1boundodd}
    n\|\E[(g_1(\X^t)-g_1(\X))(g(\X^t)-g(\X))^{k-1}\mid U_n]\|_2\\&\le\frac{2^{k-1}}{\sqrt{n}^{k}(n-1)^{k-1}}\|\sum_{i\le n}(u_1(X'_i)-u_1(X_i))(\sum_{j\ne i}u(X'_i,X_j)-u(X'_i,X_j))^{k-1}\|_2 \\
&\le\frac{1}{\sqrt{n}^{k-1}} 2^{k} [1+\frac{\sqrt{k}}{\sqrt{n-1}}]R^{k}
    \le\frac{c(t)^{k-3}}{n} 2^{k} [1+\frac{\sqrt{k}}{\sqrt{n-1}}]R^{3}.
\end{talign}
and the advertised odd $k$ result in \cref{higher}.

The even $k$ result \cref{higher} follows from Jensen's inequality and the triangle inequality as
\begin{talign}
\label{eq:gboundeven}
n&\|  \E[(g(\X^t)-g(\X))^{k}\mid U_n]\|_2
    \le \|  \frac{1}{\sqrt{n}^k(n-1)^k}\sum_{i\le n}(\sum_{j\ne i}u(X'_i,X_j)-u(X_i,X_j))^{k}\|_2
    \\&\le \frac{1}{\sqrt{n}^{k-2}(n-1)^k}\|  (\sum_{j\ne i}u(X'_1,X_j)-u(X_1,X_j))^{k}\|_2    
    \le \frac{2^k}{\sqrt{n}^{k-2}}R^{k}
    \le  \frac{2^kc(t)^{k-4}}{n}R^{4}
    \qtext{and} \\
\label{eq:g1boundeven}
n&\|  \E[(g_1(\X^t)-g_1(\X))(g(\X^t)-g(\X))^{k-1}\mid U_n]\|_2
    \\&\le \frac{1}{\sqrt{n}^{k-2}(n-1)^{k-1}}\|[u_1(X_1')-u_1(X_1)]  (\sum_{j\ne i}u(X'_1,X_j)-u(X_1,X_j))^{k-1}\|_2    %
    \le  \frac{2^kc(t)^{k-4}}{n}R^{4}.
\end{talign}
Finally, the bounds \cref{second,eq:gboundodd,,eq:g1boundodd,,eq:gboundeven,,eq:g1boundeven}
imply that \cref{eq:exchangeable-pair-assumption} holds as 
\begin{talign}
\sum_{k=3}^{\infty}\frac{e^{-kt}k \|\E[(g(\X^t)-g(\X))^{k-1}F(\X^t,\mathbf{X})\mid U_n]\|_2}{\sqrt{k-1}!\sqrt{1-e^{-2t}}^k} 
	\leq
\sum_{k=3}^{\infty}
\frac{3}{2} \frac{k}{\sqrt{k-1}!} (1+\frac{\sqrt{k}}{\sqrt{n-1}})
(\frac{4R^2}{(e^{2t}-1)n})^{k/2}
	< \infty.
\end{talign}

\subsection{\pcref{porcel}}\label{proof_porcel}Using the definition of $\tilde\tau_{t,s}$ and the triangle inequality we obtain the upper estimate  
\begin{talign}
&\int_{-\frac{1}{2}\log(1-\frac{D}{\sqrt{n}})}^{\infty}\|\frac{e^{-2t}}{{1-e^{-2t}}}(Z^2-1)-\frac{e^{-t}}{\sqrt{1-e^{-2t}}}U_nZ-\tilde\tau_{t,s}\|_{2}
    \le (a_1)+(a_2)+(a_3)\\
    &\defeq
\int_{-\frac{1}{2}\log(1-\frac{D}{\sqrt{n}})}^{\infty}\frac{e^{-t}\|h_1(Z)\|_{2}}{\sqrt{1-e^{-2t}}}\|n\E[F(\X^t,\X)\mid U_n]+U_n\|_{2}dt\\&+\int_{-\frac{1}{2}\log(1-\frac{D}{\sqrt{n}})}^{\infty}\frac{e^{-2t}\|h_2(Z)\|_{2}}{{1-e^{-2t}}}\|\frac{n}{2}\E[F(\X^t,\X)(g(\X^t)-g(\X))\mid U_n]-1\|_{2}dt
\\&+\int_{-\frac{1}{2}\log(1-\frac{D}{\sqrt{n}})}^{\infty}\sum_{k\ge 3} \frac{e^{-tk}\|h_{k}(Z)\|_{2}}{k!(\sqrt{1-e^{-2t}})^{k}}n\|\E[F(\X^t,\X)(g(\X^t)-g(\X))^{k-1}\mid U_n]\|_{2}dt.
\end{talign}
We will bound each of these terms in turn. First, by \cref{second} of \cref{pair_moment_bounds}, we have  
$(a_1)=0.$
Next, by \cref{second} and the identity $\twonorm{h_2(Z)}^2 = \E[(Z^2-1)^2]=2$, we find that
\begin{talign}\label{snow2}
(a_2) 
    &\le 
\twonorm{h_2(Z)}B_{n}\int_{-\frac{1}{2}\log(1-\frac{D}{\sqrt{n}})}^{\infty}\frac{e^{-2t}}{{1-e^{-2t}}}dt
    = 
\frac{\sqrt{2}B_{n}}{2}\int^{{1-\frac{D}{\sqrt{n}}}}_{0}\frac{1}{{1-t}}dt
    =
\frac{\sqrt{2}{B_{n}}}{4}\log(n/D^2).
\end{talign}

Now fix any odd $k\ge 3$. 
Since $\kappa\geq \frac{R^2}{D\sqrt{n}}$, if $t\ge -\frac{1}{2}\log(1-\frac{D}{\sqrt{n}})$ then $t\ge -\frac{1}{2}\log(1-\frac{R^2}{n\kappa})$.  Therefore by \cref{higher} of \cref{pair_moment_bounds}, we have
\begin{talign}
&\int_{-\frac{1}{2}\log(1-\frac{D}{\sqrt{n}})}^{\infty}\frac{e^{-kt}}{\sqrt{{1-e^{-2t}}}^{k}}n\|\E[F(\X^t,\X)(g(\X^t)-g(\X))^{k-1}|\mid U_n]\|_{2}dt
   \\& \le 
\frac{3R^3}{2n}[1+\frac{\sqrt{k}}{\sqrt{n-1}}]\int_{-\frac{1}{2}\log(1-\frac{D}{\sqrt{n}})}^{\infty}\frac{2^ke^{-tk}c(t)^{k-3}}{\sqrt{1-e^{-2t}}^{k}}dt
    =
\frac{3R^3\sqrt{\kappa}^{k-3}}{2n}[1+\frac{\sqrt{k}}{\sqrt{n-1}}]\int_{-\frac{1}{2}\log(1-\frac{D}{\sqrt{n}})}^{\infty}\frac{e^{-tk}}{\sqrt{1-e^{-2t}}^{3}}dt
  \\&  =
\frac{3R^3\sqrt{\kappa}^{k-3}}{2n}[1+\frac{\sqrt{k}}{\sqrt{n-1}}]\int^{\sqrt{1-\frac{ D}{\sqrt{n}}}}_0\frac{x^{k-1}}{\sqrt{1-x^2}^{3}}dx
    \le
\frac{3R^3\sqrt{\kappa}^{k-3}}{2n}[1+\frac{\sqrt{k}}{\sqrt{n-1}}]\frac{n^{1/4}}{\sqrt{D}}.
\end{talign}
Two applications of Stirling's approximation \citep{robbins1955remark} imply, for all $m\in\mathbb{N}\setminus\{0\}$,
\begin{talign}\label{taylor_swift}
\sqrt{(2m+1)!} &\geq \sqrt{2m+1}\sqrt{\sqrt{2\pi(2m)} \cdot (2m/e)^{2m} \cdot \exp(\frac{1} {12(2m)+1})} 
        \geq \sqrt{2m+1} 2^m m! / (m^{1/4}\alpha), \\
\sum_{\substack{k\ge 3\\k~\rm{odd}}}\frac{2^k\sqrt{\kappa}^k\sqrt{k}}{\sqrt{k!}}
    &\le  
2\alpha \sqrt{\kappa}\sum_{{m\ge 1}}\frac{2^m\kappa^mm^{1/4}}{m!}
    \le 
2\alpha \sqrt{\kappa}\sum_{{m\ge 1}}\frac{2^m\kappa^m}{(m-1)!}
    \le  
4\alpha\sqrt{\kappa}^{3/2}e^{2\kappa},
    \qtext{and} \\
\sum_{\substack{k\ge 3\\k~\rm{odd}}}\frac{2^k\sqrt{\kappa}^k}{\sqrt{k!}}
    &\le  
2\alpha \sqrt{\kappa}\sum_{{m\ge 1}}\frac{2^m\kappa^mm^{1/4}}{m!\sqrt{2m+1}}
    \le  
2\alpha(e^{2\kappa}-1)\sqrt{\kappa/3}.
\end{talign}
Hence, using Tonelli's theorem \citep[Thm.~13.8]{Schilling_2005} to exchange the sum and integral and the Hermite inequality $\|h_k(Z)\|_{2}\le \sqrt{k!}$ \citep[Lem.~3]{bonis2020stein},  we find that 
    \begin{talign}&
     \int_{-\frac{1}{2}\log(1-\frac{D}{\sqrt{n}})}^{\infty}
      \sum_{\substack{k\ge 3:\\k \text{ odd}}} \frac{\|h_k(Z)\|_{2}}{{k!}}\frac{e^{-tk}}{\sqrt{{1-e^{-2t}}}^{k}}n\|\E[F(\X^t,\X)(g(\X^t)-g(\X))^{k-1}\mid U_n]\|_{2}dt \\
    &\le 
\frac{6\alpha R^3 e^{2\kappa}}{(n\kappa)^{3/4}\sqrt{D}\sqrt{n-1}}
    +
\frac{\sqrt{3}\alpha R^3(e^{2\kappa}-1)}{n^{3/4}\sqrt{D}\kappa}.
\end{talign} 

Next fix any even $k\ge 4$. By \cref{higher}, %
\begin{talign}&
   \int^{\infty}_{-\frac{1}{2}\log(1-\frac{D}{\sqrt{n}})}\frac{e^{-tk}}{\sqrt{1-e^{-2t}}^{k}}n\|\E[F(\X^t,\X)(g(\X^t)-g(\X))^{k-1}\mid U_n]\|_{2}dt
 \\&\le 
\frac{2^k3\sqrt{\kappa}^{k-4}R^4}{2n} \int^{\sqrt{1-\frac{D}{\sqrt{n}}}}_{0}\frac{x^{k-1}}{\sqrt{1-x^2}^4}dx
\le\frac{2^k3\sqrt{\kappa}^{k-4}R^4}{2n} \int^{\sqrt{1-\frac{D}{\sqrt{n}}}}_{0}\frac{1}{({1-x^2})^2}dx
\\&\le\frac{2^k3\sqrt{\kappa}^{k-4}R^4}{8n}[\frac{2\sqrt{n}\sqrt{1-\frac{D}{\sqrt{n}}}}{D}+\log(\frac{2}{1-\sqrt{1-D/\sqrt{n}}})] 
\le \frac{2^k3\sqrt{\kappa}^{k-4}R^4}{8n}[\frac{2\sqrt{n}\sqrt{1-\frac{D}{\sqrt{n}}}}{D}+\log(\frac{4\sqrt{n}}{D})] .
\end{talign}
A double invocation of Stirling's approximation now gives, for all $m\in \mathbb{N}\setminus\{0\}$,
\begin{talign}\label{taylor_swift2}
&\sqrt{(2m)!} \geq \sqrt{\sqrt{2\pi(2m)} \cdot (2m/e)^{2m} \cdot \exp(\frac{1} {12(2m)+1})} 
        \geq 2^m m! / (m^{1/4}\alpha)
    \qtext{and hence} \\
&\sum_{\substack{k\ge 4:\\k~\textup{even}}}\frac{2^k\sqrt{\kappa}^k}{\sqrt{k!}}
   \le \sum_{\substack{m\ge 2}}\frac{2^{2m}{\kappa}^m}{\sqrt{(2m)!}}
   \le \sum_{\substack{m\ge 2}}\frac{2^{m}{\kappa}^mm^{1/4}\alpha}{m!}
   \le 2\kappa  \alpha  (e^{2\kappa}-1).
\end{talign} 
Hence, we find that, by Tonelli's theorem, 
\begin{talign}\label{dia}
&\int_{-\frac{1}{2}\log(1-\frac{D}{\sqrt{n}})}
\sum_{\substack{k\ge 4:\\k \text{ even}}}\frac{\|h_{k}(Z)\|_{2}}{k!} 
\frac{e^{-tk}}{\sqrt{1-e^{-2t}}^{k}}n\|\E[F(\X^t,\X)(g(\X^t)-g(\X))^{k-1}\mid U_n]\|_{2}dx 
   \\
    &\le 
 \alpha  (e^{2\kappa}-1)\frac{3R^4}{4\kappa n}[\frac{2\sqrt{n}\sqrt{1-\frac{D}{\sqrt{n}}}}{D}+\log(\frac{4\sqrt{n}}{D})]. 
\end{talign} 
Our results and the relation $\frac{e^{2\kappa}-1}{\kappa} \leq 2e^{2\kappa}$ imply that $(a_1)+(a_2)+(a_3) \leq \beta$ as desired.

\section{\pcref{example}} \label{proof_example}
We first identify the score functions of $U_n.$
 Fix any $\Psi\in C_c^\infty$ and, for each $i\leq n$, define $\X_{-i} \defeq (X_1, \dots, X_{i-1},X_{i+1}, \dots, X_n)$ and $S_{-i}\defeq \sum_{j\neq i} X_j$. 
Then, for all distinct $i,j\in[n]$,
\begin{talign}&
\E[\frac{\score[X_1](X_i)}{S_{-i}}\Psi(U_n)]
    =   
\E[\E[\frac{\score[X_1](X_i)}{\sum_{j\ne i}X_j}\Psi(U_n)\mid\X_{-i}]]
    =
\frac{-\E[\Psi'(U_n)]}{\sqrt{n}(n-1)}
    \qtext{and} \\
&\E[(\frac{\score[X_1](X_i)\score[X_1](X_j)}{S_{-i}S_{-j}}-\frac{\score[X_1](X_j)}{S_{-j}^2S_{-i}})\Psi(U_n)]\\& = 
-\frac{1}{\sqrt{n}(n-1)}\E[\frac{\score[X_1](X_j)}{S_{-j}}\Psi'(U_n)]+\E[\frac{\score[X_1](X_j)}{S_{-i}S_{-j}^2}\Psi(U_n)]-\E[\frac{\score[X_1](X_j)}{S_{-j}^2S_{-i}}\Psi(U_n)]
   =
\frac{1}{n(n-1)^2}\E[\Psi''(U_n)]
\end{talign}
since $\score[X_1]$ is a score function.
Thus, $U_n$ admits the first and second-order score functions
\begin{talign}
\score[U_n](U_n)
    &=
\frac{n-1}{\sqrt{n}}\E[\sum_{i=1}^{n}\frac{\score[X_1](X_i)}{S_{-i}}\mid U_n] 
    \sstext{and}
I_{U_n}(U_n)
    =
\E[\sum_{\substack{i\le n,\\j\ne i}}\frac{\score[X_1](X_j)(\score[X_1](X_i)-1/S_{-j})}{S_{-i}S_{-j}/(n-1)}\mid U_n].
\end{talign}

Next, we bound $\|\score[U_n](U_n)\|_2$ %
using %
Jensen's inequality, the fact $\E[\score[X_1](X_1)\mid\X_{-1}]=0$,  the Efron-Stein inequality~\citep[(2.1)]{steele1986efron},  %
Cauchy-Schwarz, 
and $X_1\ge a$ a.s.\ in turn:
\begin{talign}
\|\score[U_n]( U_n)\|^2_{2}
    &\le 
\|\frac{{n-1}}{\sqrt{n}}\sum_{i=1}^{n}\frac{\score[X_1](X_i)}{S_{-i}}\|^2_{2} 
    = 
\rm{Var}(\frac{{n-1}}{\sqrt{n}}\sum_{i=1}^{n}\frac{\score[X_1](X_i)}{S_{-i}})
    \\&\le
\frac{(n-1)^2}{2} 
\|\frac{\score[X_1](X_n)-\score[X_1](X_{n+1})}{S_{-n}}+\sum_{j< n}(\frac{\score[X_1](X_j)}{S_{-j}}-\frac{\score[X_1](X_j)}{S_{-j}+X_{n+1}-X_n})\|^2_{2}
    \\&\le
(n-1)^2 
\|\frac{\score[X_1](X_n)-\score[X_1](X_{n+1})}{S_{-n}}\|_2^2
    +
(n-1)^3\sum_{j< n}\|\frac{\score[X_1](X_j)(X_{n+1}-X_n)}{S_{-j}(S_{-j}+X_{n+1}-X_n)}\|^2_{2}
    \\&{\le }
\frac{2(n-1)^2\|\score[X_1](X_1)\|_2^2}{(n-1)^2a^2} 
+\frac{2(n-1)^4\|X_1\|_2^2\|\score[X_1](X_1)\|_2^2}{(n-1)^4a^4} 
    \le 
\frac{2\|\score[X_1](X_1)\|_2^2}{a^2} (1
+\frac{\|X_1\|_2^2}{a^2}). 
\end{talign}

Moreover, Jensen's inequality, Cauchy-Schwarz, $X_1\geq a$ a.s., and \cref{dino_veg} give
\begin{talign}
&\frac{1}{(n-1)^2}\|I_{U_n}( U_n)\|^2_{2}  
    \le 
2
(\|\sum_{i\leq n, j\ne i}\frac{\score[X_1](X_j)}{S_{-j}^2S_{-i}}\|^2_{2}  
    + 
\|\sum_{i\leq n}\frac{\score[X_1](X_i)^2}{S_{-i}^2}\|^2_2
    +
\|\sum_{i\leq n}\frac{\score[X_1](X_i)}{S_{-i}}\|^4_4) \\
    &\le
\frac{3n^2}{(n-1)^4a^6}
\|\score[X_1](X_1)\|^2_{2} 
    +
\frac{3n^2}{(n-1)^4a^4}
\|\score[X_1](X_1)\|^4_{4} 
    +
16
(\|\sum_{i\leq n}\frac{\score[X_1](X_i)}{S}\|^4_4
    +
\|\sum_{i\leq n}\frac{\score[X_1](X_i)X_i}{S_{-i}S}\|^4_4)
\\
    &\le
\frac{3n^2}{(n-1)^4a^6}
\|\score[X_1](X_1)\|^2_{2} 
    +
\frac{3n^2}{(n-1)^4a^4}
\|\score[X_1](X_1)\|^4_{4} 
    +
\frac{144}{n^2a^4}\|\score[X_1](X_1)\|^4_4
    +
\frac{16}{(n-1)^4a^8}
\|X_1\score[X_1](X_1)\|^4_4.
\end{talign}

Furthermore, for $\mu\defeq\E[X_1]$, the triangle inequality and \cref{dino_veg} imply
\begin{talign}
\sqrt{n}(n-1)\|U_n\|_4
    &\le 
\norm{(\sum_{i\le n}X_i-\mu)^2}_4
    +
2\mu n\norm{\sum_{i\le n}X_i-\mu}_4 
    + 
\norm{\sum_{i\le n}X_i^2-\mu^2}_4 \\
    &\le
n(7\norm{X_1-\mu}_8^2
    +
\norm{X_1^2-\mu^2}_4)
    +
\sqrt{12n}\,n\,\mu\norm{X_1-\mu}_4
\end{talign}
so that, by Cauchy-Schwarz and our prior arguments,
\begin{talign}
&\|U_n\score[U_n]( U_n)\|_{2}
    \le 
\|\score[U_n]( U_n)\|_{4}\|U_n\|_4
    \le 
\frac{1}{n}\norm{\sum_{i\le n} \frac{\score[X_1](X_i)}{S_{-i}}}_4 \sqrt{n}(n-1)\|U_n\|_4 \\
    &\le
(\frac{72}{a^4}\|\score[X_1](X_1)\|^4_4
    +
\frac{8n^2}{(n-1)^4a^8}
\|X_1\score[X_1](X_1)\|^4_4)^{1/4}
(\sqrt{12} \mu \norm{X_1-\mu}_4 
    + 
\frac{7\norm{X_1-\mu}_8^2 + \norm{X_1^2-\mu^2}_4}{\sqrt{n}}).
\end{talign} 
Since $\Cov(u(X_1,X_2),u(X_1,X_3))=\sigma^2/4$, the final claim now follows from \cref{ustattheorem}.
\section{\pcref{concentration}}\label{proof_concentration}
Write $\density$ for the Lebesgue density of $S$ and define $h\defeq\density/\varphi$. 
The first claim follows from the triangle inequality and Cauchy-Schwarz as 
\begin{talign}
| P(S> u)-\Phi^c(u)|&=|\E[(h(Z)-1)\mathbb{I}(Z> u)]|
   =|\E[(\sqrt{h(Z)}-1)(\sqrt{h(Z)}+1)\mathbb{I}(Z> u)]|
   \\&\le |\E[(\sqrt{h(Z)}-1)\mathbb{I}(Z> u)]| +   |\E[(\sqrt{h(Z)}-1)\sqrt{h(Z)}\mathbb{I}(Z> u)]|
   \\&\le \sqrt{\E[(\sqrt{h(Z)}-1)^2]}\big[\sqrt{\E[\mathbb{I}(Z> u)]}+\sqrt{\E[h(Z)\mathbb{I}(Z> u)]}\big]
    \\&= \sqrt{\int_{-\infty}^{\infty}(\sqrt{\density(x)}-\sqrt{\varphi(x)})^2dx}\big[\sqrt{\Phi^c(u)}+\sqrt{\P(S > u)}\big]
    \\&= \sqrt{2}H(S,Z)\big[\sqrt{\Phi^c(u)}+\sqrt{P(S> u)}\big].
\end{talign}
The second claim follows analogously as 
\begin{talign}
| P(|S|> u)-2\Phi^c(u)|&=|\E[(h(Z)-1)\mathbb{I}(|Z|> u)]|
            \le \sqrt{2}H(S,Z)\big[\sqrt{2\Phi^c(u)}+\sqrt{P(|S|> u)}\big]. 
\end{talign}

\section{Additional Lemmas}

\begin{lemma}[Marcinkiewicz-Zygmund inequality {\citep[Thm.~2.1]{rio2009moment}}]\label{dino_veg} Let $(\mc{F}_i)_{i\ge1}$ be a filtration and let  $(\tilde X_i)_{i\ge1}$ be a sequence of $L^p$ random variables , for some $p\ge 2,$ that are such that $\E[\tilde X_i|\mc{F}_i]=0$. Then we have 
 \begin{talign}\|\sum_{i\le n}\tilde X_i\|_p\le \sqrt{p-1}\sqrt{\sum_{i=1}^n\|\tilde X_i\|^2_p}.\end{talign}
\end{lemma}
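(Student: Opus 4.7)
The plan is to proceed by induction on $n$, reducing the claim to the one-step estimate
\begin{talign*}
\|S_n\|_p^2 - \|S_{n-1}\|_p^2 \le (p-1)\|\tilde X_n\|_p^2
\end{talign*}
for $S_n \defeq \sum_{i\le n}\tilde X_i$. The base case $n=1$ is immediate since $p\ge 2$, and summing the one-step bound telescopically delivers the conclusion.

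For the inductive step, I would start from a second-order Taylor expansion of the convex function $\phi(x)=|x|^p$ (for which $\phi''(x)=p(p-1)|x|^{p-2}$), giving
\begin{talign*}
|S_n|^p \le |S_{n-1}|^p + p\,\textup{sgn}(S_{n-1})|S_{n-1}|^{p-1}\tilde X_n + \tfrac{p(p-1)}{2}\max(|S_{n-1}|,|S_n|)^{p-2}\tilde X_n^2.
\end{talign*}
Under Rio's indexing convention, $S_{n-1}$ is $\mc{F}_n$-measurable, so the tower property combined with the martingale-difference hypothesis $\E[\tilde X_n\mid\mc{F}_n]=0$ eliminates the linear term after taking expectations.

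The main obstacle is to bound the quadratic remainder so that the induction closes with the sharp constant $p-1$. I would apply H\"older's inequality with conjugate exponents $p/(p-2)$ and $p/2$ to obtain
\begin{talign*}
\E\bigl[\max(|S_{n-1}|,|S_n|)^{p-2}\tilde X_n^2\bigr] \le \|\max(|S_{n-1}|,|S_n|)\|_p^{p-2}\,\|\tilde X_n\|_p^2,
\end{talign*}
then feed this back into the expectation of the Taylor inequality. Setting $u_n\defeq \|S_n\|_p$ and using $\|\max(|S_{n-1}|,|S_n|)\|_p \le \max(u_{n-1},u_n)$, one obtains a functional inequality on $u_n^p$ whose rearrangement yields $u_n^2 - u_{n-1}^2 \le (p-1)\|\tilde X_n\|_p^2$. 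Extracting the constant $(p-1)$ cleanly is the delicate bookkeeping step, but it is precisely what the factor $\tfrac{p(p-1)}{2}$ from $\phi''$ was placed to deliver.

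An alternative route I would consider is to combine Burkholder's sharp martingale square-function inequality $\|S_n\|_p \le \sqrt{p-1}\,\|[S]_n^{1/2}\|_p$ for $p\ge 2$ with the $L^{p/2}$ triangle inequality $\|\textstyle\sum_i \tilde X_i^2\|_{p/2} \le \sum_i \|\tilde X_i\|_p^2$; squaring the composition reproduces the claim immediately. Either way, matching the precise constant $\sqrt{p-1}$ is the nontrivial point: the Taylor approach extracts it by hand, whereas the Burkholder route inherits it from a nontrivial black-box inequality.
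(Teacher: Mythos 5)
The paper cites \citet[Thm.~2.1]{rio2009moment} without reproducing a proof, so I compare your argument against Rio's. Your overall strategy---telescoping the one-step estimate $\|S_n\|_p^2-\|S_{n-1}\|_p^2\le(p-1)\|\tilde X_n\|_p^2$ obtained from a Taylor expansion of $|x|^p$, killing the linear term by the martingale-difference property, and then applying \Holder with exponents $p/(p-2)$ and $p/2$---is exactly Rio's strategy and is sound up to that point.

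The gap is the step $\|\max(|S_{n-1}|,|S_n|)\|_p\le\max(u_{n-1},u_n)$, which is false: the $L^p$ norm of a pointwise maximum is not controlled by the maximum of the $L^p$ norms. For instance, if $A$ and $B$ are i.i.d.\ uniform on $[0,1]$ then $\|\max(A,B)\|_p^p=\tfrac{2}{p+2}>\tfrac{1}{p+1}=\max(\|A\|_p,\|B\|_p)^p$. The trouble is that you used the mean-value (Lagrange) form of the Taylor remainder. Rio instead uses the integral form, $|S_n|^p=|S_{n-1}|^p+p|S_{n-1}|^{p-2}S_{n-1}\tilde X_n+p(p-1)\tilde X_n^2\int_0^1(1-u)|(1-u)S_{n-1}+uS_n|^{p-2}\,du$, so the intermediate point is a genuine convex combination $(1-u)S_{n-1}+uS_n$ and the correct bound $\|(1-u)S_{n-1}+uS_n\|_p\le(1-u)u_{n-1}+uu_n$ follows from the triangle inequality, not from any property of pointwise maxima. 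Moreover, even granting your max bound, the constant does not close to $p-1$: rearranging $u_n^p\le u_{n-1}^p+\tfrac{p(p-1)}{2}u_n^{p-2}\|\tilde X_n\|_p^2$ (case $u_n>u_{n-1}$) gives $u_n^2-u_{n-1}^2\le(p-1)(u_n/\xi)^{p-2}\|\tilde X_n\|_p^2$ for some $\xi\in(u_{n-1},u_n)$, which overshoots $(p-1)$. After the integral-remainder fix the factor $(1-u)$ is exactly what compensates, and the final step reduces to the scalar inequality $\int_{u_{n-1}}^{u_n}(2v-(u_{n-1}+u_n))v^{p-2}\,dv\ge 0$, which holds because $v\mapsto v^{p-2}$ is nondecreasing on an interval symmetric about its midpoint; this is the ``delicate bookkeeping'' you flagged and it does need to be verified rather than asserted.

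On the alternative route: the constant you attribute to Burkholder's square-function inequality, $\|S_n\|_p\le\sqrt{p-1}\,\|[S]_n^{1/2}\|_p$, is not a standard sharp form of that inequality (Burkholder's sharp martingale-transform constant for $p\ge2$ is $p-1$, not $\sqrt{p-1}$). Since $\|[S]_n^{1/2}\|_p\le(\sum_i\|\tilde X_i\|_p^2)^{1/2}$ by the $L^{p/2}$ triangle inequality, Rio's bound is strictly weaker than a square-function inequality with constant $\sqrt{p-1}$; Rio's contribution is precisely that this weaker functional admits the improved explicit constant, so invoking the stronger statement as a black box is circular unless you can cite it.
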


\begin{lemma}[Score function properties]
\label{It} 
Let $S$ be a random variable with convex support $\Omega_S\subseteq\mathbb{R}$ and  Lebesgue density $f_S\in C^1$.
The following statements hold true.
\begin{enumerate}[leftmargin=*]
    \item Any first-order score function or second-order score function $I_S$ for $S$ is %
$S$-almost everywhere unique amongst elements of $L^1(S)$ satisfying \cref{eq:score-assumps}.
    \item If  $f_S$ is twice differentiable with $\score[S]\defeq{f_S'/}{f_S}\in\Lp[1](S), I_S\defeq{f_S''/}{f_S}\in\Lp[1](S)$, and 
    \begin{talign}\label{eq:vanishing}
    \lim_{x\to\sup\Omega_S}\max(|f_S(x)|,|f'_S(x)|) = \lim_{x\to\inf\Omega_S}\max(|f_S(x)|,|f'_S(x)|) = 0,  
    \end{talign}
    then $\score[S]$ and $I_S$ are first and second-order score functions for $S$. %
\end{enumerate}
\end{lemma}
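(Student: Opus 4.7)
My plan is to prove the two parts separately; both reduce to standard manipulations once the integration-by-parts bookkeeping is in place.

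For part 1 (uniqueness), I would take two first-order score functions $\score_1,\score_2\in\Lp[1](S)$ and let $d\defeq\score_1-\score_2$. Subtracting the two defining identities \cref{eq:score-assumps} gives $\int_\reals d(x)\Psi(x)f_S(x)\,dx=0$ for every $\Psi\in\Cinfc$. Since $d(S)\in\Lp[1](S)$ means $d\cdot f_S\in L^1(\reals)$ under Lebesgue measure (and hence is locally integrable), the fundamental lemma of the calculus of variations (du~Bois-Reymond) forces $d(x)f_S(x)=0$ Lebesgue-a.e. Because the law of $S$ has density $f_S$, we then have
\begin{talign}
\P(d(S)\ne 0)=\int_\reals\mathbb{I}(d(x)\ne 0)\,f_S(x)\,dx=0,
\end{talign}
so $d(S)=0$ almost surely. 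The identical argument handles second-order uniqueness.

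For part 2 (existence), I would verify \cref{eq:score-assumps} directly by integration by parts. Write $(a,b)\defeq\mathrm{int}\,\Omega_S$ with $-\infty\le a<b\le+\infty$, and fix $\Psi\in\Cinfc$. Because $\score[S](S)\in\Lp[1](S)$, we have $\int_a^b|f_S'(x)|\,dx=\E[|\score[S](S)|]<\infty$, so
\begin{talign}
\E[\score[S](S)\Psi(S)]=\int_a^b\frac{f_S'(x)}{f_S(x)}\Psi(x)\,f_S(x)\,dx=\int_a^b f_S'(x)\Psi(x)\,dx.
\end{talign}
Integration by parts produces a boundary term $[f_S\Psi]_a^b$, which vanishes at each endpoint: at a finite endpoint, \cref{eq:vanishing} gives $f_S\to 0$ while $\Psi$ stays bounded; at an infinite endpoint, $\Psi$ has compact support and hence vanishes there. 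What remains is $-\int_a^b f_S(x)\Psi'(x)\,dx=-\E[\Psi'(S)]$. For $I_S$, I would apply the same strategy with two successive integrations by parts: the first creates a boundary term $[f_S'\Psi]_a^b$, killed by the $f_S'\to 0$ half of \cref{eq:vanishing} (or compact support of $\Psi$); the second creates $[f_S\Psi']_a^b$, killed by the $f_S\to 0$ half (or compact support of $\Psi'$). Integrability of $f_S''$ follows from $I_S(S)\in\Lp[1](S)$ exactly as above.

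The principal obstacle is the endpoint bookkeeping: I must verify the boundary terms vanish across the four combinations of finite/infinite endpoints at $a$ and $b$ and justify the limit-integral interchange when an endpoint is infinite. All of this follows cleanly from \cref{eq:vanishing}, compact support of $\Psi$ and $\Psi'$, and the $L^1(\reals)$ bounds on $f_S'$ and $f_S''$ just noted, after which the argument is a textbook integration-by-parts computation.
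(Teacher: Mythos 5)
Your proposal is correct, and Part 2 (existence via integration by parts, with the boundary terms killed at finite endpoints by \cref{eq:vanishing} and at infinite endpoints by the compact support of $\Psi,\Psi'$) follows essentially the same route as the paper.

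For Part 1 (uniqueness) you take a genuinely different route. You subtract the two defining identities to get $\int_\reals d(x)\Psi(x)f_S(x)\,dx=0$ for every test function, note that $d\cdot f_S\in L^1(\reals)\subset L^1_{\mathrm{loc}}$, and invoke the du Bois-Reymond/fundamental lemma of the calculus of variations to conclude $d\cdot f_S=0$ Lebesgue-a.e., hence $d(S)=0$ a.s. The paper instead proves the same conclusion from scratch: it fixes $\epsilon,\delta$, approximates the indicator $\mathbb{I}(g(S)>\tilde g(S),\ |S|\le\delta)$ by a $\Cinfc$ function, and bounds $\E[\max(g(S)-\tilde g(S),0)]\le 2\epsilon$ directly, then symmetrizes. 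Both arguments are sound; yours is more concise because it offloads the approximation argument to a standard cited lemma, while the paper's version is self-contained (essentially re-deriving the lemma in this special case). One small point worth making explicit in your write-up: $d(x)f_S(x)=0$ Lebesgue-a.e. gives $d(S)=0$ a.s.\ because the set $\{d\neq 0\}$ splits into $\{d\neq 0,\,f_S>0\}$ (Lebesgue-null, hence $f_S$-null) and $\{d\neq 0,\,f_S=0\}$ (automatically $f_S$-null); your displayed integral is correct but a reader may want to see that decomposition spelled out.
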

\begin{proof}
Suppose $g,\tilde{g}\in\Lp[1](S)$ are both score functions for $S$, 
fix $\epsilon>0$, and choose $\delta>0$ so that $\E[|g(S)-\tilde{g}(S)|\mathbb{I}(|S|> \delta)] < \epsilon$. 
Since any measurable compactly supported indicator function can be arbitrarily approximated by $\Cinfc$ functions, select $\Psi_{\epsilon,\delta}\in  \Cinfc$ satisfying 
$\|\Psi_{\epsilon,\delta}(S)-\mathbb{I}(g(S)> \tilde{g}(S), |S|\le \delta)\|_\infty\le \epsilon$
so that
\begin{talign}
\E[\max(g(S)-\tilde{g}(S),0)]
    &=
\E[[g(S)-\tilde{g}(S)]~\mathbb{I}(g(S)>\tilde{g}(S))]\\
    &\le 
\epsilon+\E[[g(S)-\tilde{g}(S)]\Psi_{\epsilon,\delta}(S)]+\E[|g(S)-\tilde{g}(S)|\mathbb{I}(|S|> \delta)]
    \le
2\epsilon.
\end{talign}
Since $\epsilon>0$ was arbitrary we  have $g(S)\le \tilde{g}(S)$ almost surely and, by symmetry, $g(S)\ge \tilde{g}(S)$ almost surely. %
Identical reasoning yields the claim for second-order scores.
    
Now suppose that $f_S$ is twice differentiable with $\score[S],I_S\in\Lp[1](S)$ and \cref{eq:vanishing} holding. %
For all $\Psi\in \Cinfc$, integration by parts, the boundedness of $\Psi$, and the assumption \cref{eq:vanishing} imply 
\begin{talign}
\E[\frac{f'_S(S)}{f_S(S)}\Psi(S)]
     &=
\int_{\Omega_S}f'_S(x)\Psi(x)dx
     =
f_S(x)\Psi(x)|_{\inf\Omega_S}^{\sup\Omega_S}- \int_{\Omega_S}f_S(x)\Psi'(x)dx
     =
-\E[\Psi'(S)] 
    \\
\E[\frac{f''_S(S)}{f_S(S)}\Psi(S)]
    &=
\int_{\Omega_S}f''_S(x)\Psi(x)dx
    =
f'_S(x)\Psi(x)|_{\inf\Omega_S}^{\sup\Omega_S}- \int_{\Omega_S}f'_S(x)\Psi'(x)dx \\
    &=
-\E[\frac{f'_S(S)}{f_S(S)}\Psi'(S)]
    =
\E[\Psi''(S)].
\end{talign} 
Hence $f'_S/f_S$ and $f''_S/f_S$ are first and second-order score functions.
\end{proof}

\begin{lemma}[Second-order score bound]\label{It2}
Suppose $S\overset{d}{=}S_1+S_2$ for independent random variables $S_1, S_2$ with first-order score functions $\score[S_1]\in\Lp[2](S_1)$ and $\score[S_2]\in \Lp[2](S_2)$. %
Then $S$ admits a second-order score function $I_S\in L^2(S)$ satisfying  \cref{eq:second-order-score-bound}.
\end{lemma}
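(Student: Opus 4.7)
The plan is to guess the second-order score as the conditional expectation
\begin{talign}
I_S(S) \defeq \E[\score[S_1](S_1)\,\score[S_2](S_2) \mid S],
\end{talign}
verify the defining identity \cref{eq:score-assumps} by iteratively applying the first-order score identities for $S_1$ and $S_2$, and then extract the $L^2$ bound \cref{eq:second-order-score-bound} from Jensen's inequality and independence.

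First I would fix any $\Psi\in\Cinfc$ and note that $\Psi'\in\Cinfc$ as well. Conditioning on $S_2$ and applying the first-order score identity for $S_1$ to the compactly supported smooth test function $x\mapsto \Psi'(x+S_2)$ gives
\begin{talign}
\E[\Psi''(S)\mid S_2] = -\E[\score[S_1](S_1)\Psi'(S_1+S_2)\mid S_2]
\end{talign}
almost surely. Taking expectations, then conditioning on $S_1$ and applying the first-order score identity for $S_2$ to $y\mapsto \Psi(S_1+y)$, yields
\begin{talign}
\E[\Psi''(S)] = -\E[\score[S_1](S_1)\Psi'(S)] = \E[\score[S_1](S_1)\score[S_2](S_2)\Psi(S)].
\end{talign}
All the intermediate expectations are finite because $\Psi,\Psi'$ are bounded and $\score[S_i](S_i)\in L^2\subseteq L^1$ on a probability space. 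The tower property then rewrites the last expression as $\E[I_S(S)\Psi(S)]$, which verifies \cref{eq:score-assumps} for $I_S$.

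For the norm bound, Jensen's inequality followed by independence of $S_1$ and $S_2$ gives
\begin{talign}
\|I_S(S)\|_2^2 \le \E[\score[S_1](S_1)^2\,\score[S_2](S_2)^2] = \|\score[S_1](S_1)\|_2^2\,\|\score[S_2](S_2)\|_2^2,
\end{talign}
which in particular confirms $I_S\in L^1(S)$ so that the uniqueness clause of \cref{It} legitimizes calling $I_S$ \emph{the} second-order score function. The only mild subtlety — and the only place I would be careful — is the repeated conditioning step above: one needs $\Psi'(S_1+\cdot)$ and $\Psi(S_1+\cdot)$ to qualify as admissible $\Cinfc$ test functions almost surely given the conditioning variable, which they do because translation preserves smoothness and compact support. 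Everything else is routine.
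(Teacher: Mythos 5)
Your proposal is correct and follows essentially the same route as the paper's proof: both take $I_S(S)=\E[\score[S_1](S_1)\score[S_2](S_2)\mid S]$ as the candidate, verify \cref{eq:score-assumps} by iterating the first-order score identities of $S_1$ and $S_2$ (you work forward from $\E[\Psi''(S)]$, the paper works backward from the conditional expectation, but the chain of equalities is identical), and then obtain \cref{eq:second-order-score-bound} via Jensen's inequality plus the independence factorization of the $L^2$ norm.
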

\begin{proof} 
Fix any $\Psi \in C_c^{\infty}$.
We invoke the tower property, the independence of $S_1$ and $S_2$, and the first-order score function definition \cref{eq:score-assumps} in turn to discover that 
\begin{talign}
\E[\E[\score[S_1](S_1)\score[S_2](S_2)|S_1+S_2)\Psi(S_1+S_2)]
&=\E[\score[S_1](S_1)\score[S_2](S_2)\Psi(S_1+S_2)]\\
&=-\E[\score[S_1](S_1)\Psi'(S_1+S_2)]
=\E[\Psi''(S_1+S_2)].
\end{talign}
Hence $I_S(S) =\E[\score[S_1](S_1)\score[S_2](S_2)|S]$ satisfies the second-order score definition \cref{eq:score-assumps}. 
Finally, by Jensen's inequality and the independence of $S_1$ and $S_2$, we have %
$
\|I_S(S)\|_2=\|\E[\score[S_1](S_1)\score[S_2](S_2)|S_1+S_2]\|_2\le \|\score[S_1](S_1)\score[S_2](S_2)\|_2= \|\score[S_1](S_1)\|_2\|\score[S_2](S_2)\|_2.$
\end{proof}

\begin{lemma}[Fundamental theorem precondition]\label{espoir}
For all 
$x\in \Omega_S$, 
under the conditions of \cref{general_bound},  $t\rightarrow \sqrt{f_{S_t}(x)}$ is continuous on $[0,\infty)$ and continuously differentiable on $(0,\infty)$.
\end{lemma}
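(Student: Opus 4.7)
The plan is to leverage the fact that, for $t>0$, $Z_t$ is the convolution of $e^{-t}S$ with a nondegenerate Gaussian, which gives a density that is jointly smooth and strictly positive, and then handle continuity at $t=0$ via a standard mollifier argument.

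\textbf{Step 1 (Representation for $t>0$).} First I would write, for every $t>0$ and $x\in\reals$,
\begin{talign*}
f_{Z_t}(x)
    = \E\Big[\frac{1}{\sqrt{1-e^{-2t}}}\,\varphi\Big(\frac{x-e^{-t}S}{\sqrt{1-e^{-2t}}}\Big)\Big],
\end{talign*}
which follows from the independence of $S$ and $Z$ and the fact that $\sqrt{1-e^{-2t}}Z$ has Gaussian density. Because the Gaussian kernel is strictly positive and the integrand is bounded by $(2\pi(1-e^{-2t}))^{-1/2}$, this yields $f_{Z_t}(x)>0$ for every $t>0$ and $x\in\reals$, a fortiori for $x\in\Omega_S$.

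\textbf{Step 2 (Smoothness in $t$ on $(0,\infty)$).} Next I would fix $t_0>0$ and work on the compact interval $[t_0/2,2t_0]$. The map $(t,s)\mapsto (1-e^{-2t})^{-1/2}\varphi((x-e^{-t}s)/\sqrt{1-e^{-2t}})$ is $C^\infty$ in $t$ for fixed $s$, and its $t$-derivative has the form of a polynomial in $(x-e^{-t}s)/\sqrt{1-e^{-2t}}$ times the Gaussian factor, uniformly in $t$ on $[t_0/2,2t_0]$. Because the Gaussian supplies super-polynomial decay in $s$, this derivative is bounded in absolute value by a constant $C(t_0,x)<\infty$ independent of $s$, hence is dominated by an $f_S$-integrable function. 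Dominated convergence then justifies differentiating under the expectation (see, e.g., the standard Leibniz rule), so $t\mapsto f_{Z_t}(x)$ is continuously differentiable on $(0,\infty)$. Combined with the strict positivity from Step 1 and the $C^\infty$ character of $u\mapsto\sqrt{u}$ on $(0,\infty)$, the chain rule yields that $t\mapsto\sqrt{f_{Z_t}(x)}$ is continuously differentiable on $(0,\infty)$.

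\textbf{Step 3 (Continuity at $t=0$).} For the continuity claim it suffices to show $f_{Z_t}(x)\to f_S(x)$ as $t\downarrow 0$, since $\sqrt{\cdot}$ is continuous on $[0,\infty)$. This is the classical fact that Gaussian mollification recovers a continuous density at its continuity points: writing $\sigma_t\defeq\sqrt{1-e^{-2t}}$ and $a_t\defeq e^{-t}$, one has
\begin{talign*}
f_{Z_t}(x)
    = \frac{1}{a_t}\int_{\reals} f_S(y)\,\frac{1}{\sigma_t/a_t}\,\varphi\Big(\frac{x/a_t - y}{\sigma_t/a_t}\Big)\,dy,
\end{talign*}
and $\sigma_t/a_t\downarrow 0$, $x/a_t\to x$ as $t\downarrow 0$. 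Since $f_S\in C^1$ is continuous and bounded on a neighborhood of $x$ (by continuity and positivity of $f_S$ on the convex set $\Omega_S$, with standard truncation elsewhere), the mollified integral converges to $f_S(x)$. Thus $\sqrt{f_{Z_t}(x)}\to\sqrt{f_S(x)}$, finishing continuity on $[0,\infty)$.

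\textbf{Main obstacle.} The routine but delicate step is Step 2: constructing the dominating integrable function needed to differentiate under the expectation. The derivative in $t$ includes terms that are merely polynomial in $s$, and one must use the Gaussian cutoff (which localizes $e^{-t}s$ near $x$ on scale $\sqrt{1-e^{-2t}}$) to extract a bound independent of $s$. Once that uniform bound is in place, everything else reduces to the chain rule and elementary properties of $\sqrt{\cdot}$.
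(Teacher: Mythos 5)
Your proof is correct in outline and reaches the same conclusion, but you take a more self-contained route than the paper. The paper's argument is shorter: it cites the Fokker--Planck theorem (Pavliotis, Thm.~4.1) to obtain, in one stroke, that $t\mapsto f_{Z_t}(x)$ is continuous on $[0,\infty)$ and continuously differentiable on $(0,\infty)$, and then the only remaining work is to verify $f_{Z_t}(x)>0$ for $t>0$ so that the square root can be taken without losing smoothness. You instead derive the regularity of $t\mapsto f_{Z_t}(x)$ from scratch --- differentiation under the integral sign with a Gaussian-tail domination for the $C^1$ claim, and a mollification argument for continuity at $t=0$. That is genuinely more work, and you flag yourself that Step~2 requires building the dominating function; the paper's reference absorbs exactly that technicality.

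Two small points worth tightening. First, in Step~1, boundedness of the integrand is irrelevant to positivity; the clean statement is simply that $\varphi$ is strictly positive everywhere and $f_S$ is a probability density, so the integral $\int f_S(y)\sigma_t^{-1}\varphi((x-e^{-t}y)/\sigma_t)\,dy$ of a strictly positive integrand against a probability measure is strictly positive. Interestingly, this is actually slightly cleaner than the paper's positivity bound $f_{Z_t}(x)\ge\varphi(|x|+\delta)\delta f_S(x)$, which is only informative when $f_S(x)>0$. Second, Step~2 as written establishes differentiability; to claim $C^1$ you need the same domination to pass to the limit in the derivative as well. Both are routine fixes, and overall the plan is sound --- it just re-proves a cited ingredient rather than invoking it.
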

\begin{proof}
Fix any $x\in \Omega_S$.
By \citet[Thm.~4.1]{pavliotis2014stochastic}, $t\mapsto f_{S_t}(x)$ is continuous on $[0,\infty)$ and continuously differentiable on $(0,\infty)$.
Thus, $t\mapsto \sqrt{f_{S_t}(x)}$ is  continuous on $[0,\infty)$. 

Next, fix any $t > 0$.
To conclude, it remains to show that $f_{S_t}(x)>0$ so that $t\mapsto \sqrt{f_{S_t}(x)}$ is also continuously differentiable on $(0,\infty)$. 
Since $f_S$ is continuous, $\exists \delta > 0$ for which 
$$\inf_{y\in [x-\delta,x+\delta ]} f_S(y)>{f_S(x)}{/2}.$$ 
Now consider the interval  
$M_x\defeq x(1-e^{-t})+\delta e^{-t}\sqrt{1-e^{-2t}}[-1,1]$.
Since $f_S(x) > 0$, %
\begin{talign}
  f_{S_t}(x)
  &  =
\int_{\reals} \frac{\varphi(y/\sqrt{1-e^{-2t}})}{\sqrt{1-e^{-2t}}} \frac{f_{S}((x-y)/e^{-t})}{e^{-t}} dy  
\ge \int_{M_x}\frac{\varphi(y/\sqrt{1-e^{-2t}})}{\sqrt{1-e^{-2t}}} \frac{f_{S}((x-y)/e^{-t})}{e^{-t}} dy  
\\&\overset{(a)}{\ge} \frac{\varphi(|x|+\delta )}{\sqrt{1-e^{-2t}}}\int_{M_x} \frac{f_{S}((x-y)/e^{-t})}{e^{-t}} dy
\overset{(b)}{\ge} \varphi(|x|+\delta )\delta f_S(x) > 0,  
\end{talign} 
as $|y/\sqrt{1-e^{-2t}}|\le |x|+\delta $ for all $y\in M_x$, yielding (a), and
$|(x-y)/e^{-t}-x|\le \delta $ for all $y\in M_x$ so that $f_S((x-y)/e^{-t})\ge f_S(x)/2$,  yielding (b).
\end{proof}

\begin{lemma}[Fubini precondition]\label{marreee}
Under the conditions of \cref{general_bound}, we have 
\begin{talign}&  \int_{0}^{T}\int_{\Omega_S}\sqrt{f_S(x)}|\partial_t\sqrt{f_{S_t}(x)}|dxdt<\infty
\qtext{for all} T > 0.
\label{eq:fubini-precondition}
\end{talign}
\end{lemma}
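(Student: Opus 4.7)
The plan is to reuse the two ingredients already assembled in the main proof of \cref{general_bound}: the pointwise identity for $\partial_t\sqrt{f_{Z_t}(x)}$ derived in \cref{oma}, and the $L^2$ bound on $I_{Z_t}(Z_t)+1+Z_t\score[Z_t](Z_t)$ obtained from the conditional-expectation representations \cref{jst1,mdt}. Combining these via Cauchy--Schwarz will reduce the claim to checking that a single scalar function of $t$ is integrable on $(0,T]$.

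First I would substitute the identity from \cref{oma},
\begin{talign*}
\partial_t\sqrt{f_{Z_t}(x)}
    =
\frac{\sqrt{f_{Z_t}(x)}}{2}\bigl[I_{Z_t}(x)+1+x\,\score[Z_t](x)\bigr],
\end{talign*}
into the inner integral and apply Cauchy--Schwarz with respect to the measure $\sqrt{f_S(x)f_{Z_t}(x)}\dx$, getting
\begin{talign*}
\int_{\Omega_S}\sqrt{f_S(x)}\,|\partial_t\sqrt{f_{Z_t}(x)}|\dx
    &\le
\tfrac{1}{2}\sqrt{\int_{\Omega_S} f_S(x)\dx}\,\sqrt{\int_{\Omega_S} f_{Z_t}(x)(I_{Z_t}(x)+1+x\score[Z_t](x))^2\dx}\\
    &=
\tfrac{1}{2}\twonorm{I_{Z_t}(Z_t)+1+Z_t\score[Z_t](Z_t)}.
\end{talign*}

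Next I would import the bound already proved in the main argument (following \cref{mdt}), which uses the tower property, Jensen's inequality, and the triangle inequality together with the score representations $\score[Z_t](Z_t)=e^t\E[\score[S](S)\mid Z_t]$ and $I_{Z_t}(Z_t)=e^{2t}\E[I_S(S)\mid Z_t]$:
\begin{talign*}
\twonorm{I_{Z_t}(Z_t)+1+Z_t\score[Z_t](Z_t)}
    \le
1+e^{2t}\twonorm{I_S(S)}+\twonorm{\score[S](S)S}+e^t\sqrt{1-e^{-2t}}\twonorm{\score[S](S)}.
\end{talign*}
Each summand on the right is continuous in $t$ and finite on $[0,T]$ thanks to the hypotheses $\score[S],I_S\in\Lp[2](S)$ and $\twonorm{\score[S](S)S}<\infty$ of \cref{general_bound}, so integrating over $t\in(0,T]$ yields a finite quantity, which establishes \cref{eq:fubini-precondition}.

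I do not anticipate a real obstacle: the only subtle point is ensuring the Cauchy--Schwarz step is justified (which follows because $f_S$ is a density and the second-order score bound is already finite for $t>0$), and ensuring the $t\to 0^+$ behaviour causes no blow-up (which is automatic since the factor $e^t\sqrt{1-e^{-2t}}$ vanishes there while the remaining terms remain bounded).
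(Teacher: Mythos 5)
Your proposal is correct and follows essentially the same route as the paper: both invoke the pointwise identity \cref{oma} (together with \cref{eq:derivloghtS,eq:secondderivloghtS}) to rewrite $\partial_t\sqrt{f_{Z_t}(x)}$, apply Cauchy--Schwarz to split off $\twonorm{I_{Z_t}(Z_t)+1+Z_t\score[Z_t](Z_t)}$ (with the remaining factor reducing to $\half$ since $f_S$ is a density), and then control that $L^2$ norm via the conditional-expectation representations \cref{jst1,mdt}, Jensen, and the triangle inequality, obtaining a bound that is finite and integrable on $[0,T]$. The only cosmetic difference is that the paper first rewrites the $x$-integral as an expectation over $Z_t$ before applying Cauchy--Schwarz, whereas you apply it directly to the integral; the final $L^2$ estimate is the same one appearing in the main proof of \cref{general_bound}.
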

\begin{proof}
Fix any $T > 0$. 
The identities  \cref{oma,,eq:secondderivloghtS,,eq:derivloghtS}, show that
\begin{talign} |\partial_t\sqrt{f_{S_t}(x)}|=\frac{\sqrt{f_{S_t}(x)}}{2}|I_t^S(x)+x\score[S_t](x)+1|
\qtext{for all} t\in [0,T] 
\qtext{and} x\in\reals.
\end{talign} 
Allowing for the integral and expectation of nonnegative quantities to be infinite, we find
\begin{talign}&
\int_{0}^{T}\int_{\Omega_S}\sqrt{f_S(x)}|\partial_t\sqrt{f_{S_t}(x)}|\,dx\,dt
    =
\half\int_{0}^{T}\E[\frac{\sqrt{f_S(S_t)}}{\sqrt{f_{S_t}(S_t)}}|I_t^S(S_t)+S_t\score[S_t](S_t)+1|] dt.
\end{talign} 
Moreover, the almost sure identities   \cref{mdt,jst1} and Cauchy-Schwarz imply
\begin{talign}&  \half\E[{\sqrt{f_S(S_t)}}{}
|I_t^S(S_t)+S_t\score[S_t](S_t)+1|/\sqrt{f_{S_t}(S_t)}]
\\&\le \half\|{\sqrt{f_S(S_t)}/}{\sqrt{f_{S_t}(S_t)}}\|_2[1+e^{2T}\|I_S(S)\|_2+2e^T\|\score[S](S)\|_2+e^T\|\score[S](S)S\|_2]
\end{talign}
Since $I_S,\score\in\Lp[2](S)$ and $\twonorm{\score(S)S}<\infty$ the result \cref{eq:fubini-precondition} follows from the observation
\begin{talign}
\half\|{\sqrt{f_S(S_t)}/}{\sqrt{f_{S_t}(S_t)}}\|_2=\frac{1}{2}\sqrt{\int_{\Omega_S}f_S(x)dx}=\frac{1}{{2}}.
\end{talign}
\end{proof}

\bibliographystyle{abbrvnat}
\bibliography{refs}

\begin{acks}
MA was supported by ONR N000142112664 and NSF DMS-2441652. 
\end{acks}

\end{document}

